\newtheorem{thm}{Theorem}[section]
\newtheorem{lma}{Lemma}[section]
\newtheorem{pro}{Proposition}[section]
\theoremstyle{definition}
\theoremstyle{remark}
\numberwithin{equation}{section}
\newcommand{\R}{{\mathbb R}}
\def\f{\frac}
\def\hf1{^\f{1}{1-\xi^2}}
\def\be{\begin{equation}}
\def\en{\end{equation}}
\def\bs{\begin{split}}
\def\es{\end{split}}
\def\ba{\begin{align}}
\def\ea{\end{align}}
\author[Lin Chang]{Lin Chang}
\address{School of Mathematics and Physics, Handan University, Handan 056006, China}
\email{ changlin23@buaa.edu.cn}
\author[Duo Liu]{ Duo Liu}
\address{School of Mathematics and Physics, Handan University, Handan 056006, China}
\email{ liuduo8610@163.com}
\author[W. Zhang]{Weiqiang  Zhang}
\address{Xi'an ShaanGu Power Co.,Ltd}
\email{zdhb1@shaangu.com}
\renewcommand{\fancyhead}{}
\title {S\MakeLowercase{tability} \MakeLowercase{of}     r\MakeLowercase{arefaction} w\MakeLowercase{aves}     u\MakeLowercase{nder} p\MakeLowercase{eriodic}
p\MakeLowercase{erturbation}     \MakeLowercase{for} a r\MakeLowercase{ate}-t\MakeLowercase{ype} v\MakeLowercase{iscoelastic} s\MakeLowercase{ystem} }
\keywords{Cauchy problem; Rate-type viscoelastic system; Periodic perturbations; Asymptotic behavior; Time decay rate; Rarefaction wave.}
\date{\today}
\begin{document}
\begin{abstract}
	In this paper, a rarefaction wave under space-periodic perturbation for the  $3\times3$ rate-type viscoelastic system is considered. It is shown that if the initial perturbation around the rarefaction wave is suitably small, then the solution of the rate-type viscoelastic system tends to the rarefaction wave.
	The stability of solutions under periodic perturbations is an interesting and important problem since the perturbation keeps oscillating at the far fields. That is, the perturbation is not integral in space. The key of proof is to construct a suitable ansatz carrying the same oscillation as the solution. Then we can find cancellations between solutions and ansatz such that the perturbation belongs to some Sobolev space. The nonlinear stability can be obtained by the weighted energy method.
\end{abstract}
\maketitle

\section{Introduction }
In this paper, we study the following rate-type viscoelastic system, which reads
\begin{equation}
	\left\{\begin{array}{l}{v_{t}-u_{x}=0}, \\
		{ u_{t}+p_{x}=0}, \\
		{(p+E v)_{t}=\frac{p_{R}(v)-p}{\tau}}.
	\end{array}\right.
	\label{1.1}\end{equation}
where $v$ and$(-p)$ denote strain and stress, $u$ is related to the particle velocity, $E$ is a positive constant called the dynamic Young's modulus, $\tau>0$ is a relaxation time, we  assume  $\tau=1$    in  this paper. $p_R(v)$  is a equilibrium value for $p$.

In this paper, we are concerned about a Cauchy problem for (\ref{1.1}) with the initial data
\begin{equation}\label{ic}
	(v,u,p)(x,0) = (v_0,u_0,p_{0})(x), \quad x\in\R,
\end{equation}
which satisfies
\begin{equation}\label{end-behavior}
	(v_0,u_0,p_{0})(x) \rightarrow \begin{cases}
		( \bar{v}_{l},\bar{u}_{l},\bar{p}_{l})+\left(\phi_{0l}, \psi_{0l},p_R( \bar{u}_l+\psi_{0l})-p_R(\bar{u}_l) \right)(x) & \quad \text{as } x\rightarrow -\infty,\\
		(\bar{v}_{r},\bar{u}_{r},\bar{p}_{r})+(\phi_{0r}, \psi_{0r},p_R( \bar{u}_r+\psi_{0r})-p_R(\bar{u}_r))(x) & \quad \text{as } x\rightarrow +\infty,
	\end{cases}
\end{equation}
where $ \bar{v}_{l}>0, \bar{v}_{r}>0, \bar{u}_{l} $ and $ \bar{u}_{r} $ are constants, $ (\phi_{0l}, \psi_{0l}) \in \R^2 $ and $ (\phi_{0r}, \psi_{0r}) \in \R^2 $ are periodic functions with periods $ \pi_l>0 $ and $ \pi_r>0, $ respectively.

This system was proposed in \cite{SI1990} to introduce a relaxation approximation to the following  system
\begin{equation}\label{1.2}
	\left\{\begin{array}{l}{v_{t}-u_{x}=0}, \\ {u_{t}+p_{R}(v)_{x}=0.}\end{array}\right.
\end{equation}
The type of system (\ref{1.2}) strictly depends on the property of $p_{R}(v) .$ Suliciu\cite{SI1990} got the semilinear hyperbolic system, which is independent of the property of $p_{R}(v),$  by introducing a visco-elastic regularisation which allows a pressure relaxation over time.     $\mathrm{Li}$ \cite{6}  studied the case of $\bar{u}_{l}>\bar{u}_{r}$ and obtained the  asymptotic behavior of the solution as time goes to infinity. So we discuss the case of $\bar{u}_{l}<\bar{u}_{r}$in this paper.


There are many results on the nonlinear stability of the elementary waves. Under certain restrictions on given initial data, Hsiao and Luo \cite{HT} studied the nonlinear stability of the corresponding shock profile for the system (\ref{1.1}), while Luo and Serre  \cite{10} investigated  the linear stability of the shock profile. Moreover, Hsiao and $\mathrm{Pan} $ \cite{HP1999}   discussed the nonlinear stability of the corresponding rarefaction wave.

The large-time behavior of elementary waves has always been a mathematical problem that has attracted much attention, and there have been many excellent results. For the case of shock wave, we refer to \cite{mn1985,hm2009} for the stability of a single shock and a composition of two shocks. For the case of rarefaction wave, we refer to \cite{MN1986}. For the case of contact discontinuity, we refer to \cite{HMX,LWX3} for the stability and optimal decay rate of contact discontinuity. Furthermore, we refer to \cite{HX,LWX2} for the decay rate of shock waves. There are also many interesting results of pointwise estimates for elementary wave patterns by the method of Green's function, we refer to \cite{Liu1,Liu2,Liu3} and the reference therein. For the multi-dimensional periodic perturbations, we refer to \cite{HXXY,HXY2022,HLWX}. In \cite{HLWX}, the exponential decay rate of non-zero modes is also obtained.


We aim to prove that the   rarefaction wave is stable for the Cauchy problem \eqref{1.1}-\eqref{end-behavior}. Roughly speaking, the solution not only exists globally but also tends to be a rarefaction wave  as time goes to infinity.  The precise statements of the main results are given in Theorem \ref{theorem201} in Section 2.
We outline the strategy as follows. We apply the energy method to study the stability of the rarefaction wave  $(V^{r}, U^{r}, P_R(V^{r}))$, in which the energy of the perturbation $(v, u, p)-(V^{r}, U^{r}, P_{R}(V^{r}))$, namely,  $ (v, u, p)-(V^{r}, U^{r}, P_{R}(V^{r}))$, ``should'' belong to some Sobolev spaces like  $H^1(\mathbb{R})$.  However, the method above can not be applicable directly in this paper since the perturbation  oscillates at the far field and hence does not belong to any $L^p$ space for $p\ge 1$. Motivated by \cite{HXY2022,LWX},  we introduce a suitable ansatz $(V,U,P_R(V)(x,t)$, which has the same oscillations as the solution $(v,u,p)(x,t)$ at the far field, so that the perturbation belongs to some Sobolev spaces and the energy method is still available.

Now, we show the method to construct the  ansatz. Taking $V$ as an example,   $V$ $= v_{l}(x,t)g  (x,t) + v _ { r } (x, t ) [ 1 - g (x,t) ]$, where  $(v_{ l },u_{ l })$ is a periodic solution of \eqref{1.2} with the initial data $(\bar{v}_{l}+\phi_{0 l}(x),\bar{u}_{l}+\psi_{0 l}(x))$ in \eqref{1.2} and is expected to have the same oscillation as $v$ near $x= -\infty$, and $g$ is a weighted function satisfying $\lim_{x\rightarrow -\infty} g  (x,t)=0, \lim_{x \rightarrow +\infty} g  (x,t)=1 .$ Similarly $v_r$ is expected to have the same oscillation as $v$ near $x=\infty$. Thus $v(x,t)-V(x,t)$ is expected to be integrable. Thus the perturbation $v-V$ could belong to $L^2$.

The rest of the paper will be arranged as follows. In Section \ref{section2}, a suitable ansatz is constructed and the main results are stated. In Section \ref{section3},  the stability problem is reformulated to a perturbation equation around the ansatz.  In Section \ref{section4},      a priori estimates are established. In Section \ref{section5},   the main results   are  proved.
\noindent
\color{black}

\section{Preliminaries and  Main Results  }\label{section2}
Consider the following Riemann problem:
\begin{equation}
	\left\{ \begin{array}{ll}
		v_{t}-u_{x}=0,\\
		u_t+(P_R(v))_x=0,
	\end{array} \right.
	\label{2.1}\end{equation}
with the initial data
\begin{equation}
	(v(x,0),u(x,0))=(v_{0}^{r}(x),u_{0}^{r}(x)),
	\label{2.2}\end{equation}
where
\begin{align}\label{2.3}
	(v_{0}^{r}(x),u_{0}^{r}(x))=\left\{\begin{array}{l }  (\bar{v}_{l},\bar{u}_{l})  ; x<0,\\
		(\bar{v}_{r},\bar{u}_{r}) ;  x>0.
	\end{array}\right.
\end{align}
We give the following hypotheses:
\begin{align}\label{2.4}
	\begin{split}
		&\left\{\begin{array}{l}
			p'_R(v)<-a_1<0 ;\\
			0<  p''_R(v)< a_2;\\
			|p'_R(v)|<E   ;\\
			p_R(v), p'_R(v), p''_R(v), p'''_R(v)    \text{ bounded.}
		\end{array}\right.\\
	\end{split}&
\end{align}
Here $a_1$, $a_2$ are two constant and $v\in[c_1,d_1]$, where $-\infty<c_1<\bar{v}_{l},\bar{v}_{r}<d_1<\infty. $ Under $ (\ref{2.4})_1$-$ (\ref{2.4})_2$ , the eigenvalues of (\ref{2.1}) are
\begin{align*}
	\lambda_{1}=-(-p'_R(v))^{\frac{1}{2}}<0<(-p'_R(v))^{\frac{1}{2}}=\lambda_{2}.
\end{align*}

We construct a smooth approximation   $ U^R(t, x)$ of the Riemann solution $u^{r}(t, x)$. The Riemann problem for typical Burgers equation  is:
\begin{equation}
	\left\{ \begin{array}{ll}
		w_{t}^{r}+w^{r} w_{x}^{r}=0,\\
		w^{r}(0, x)=w_{0}^{r}(x)=\left\{\begin{array}{ll}{\bar{w}_{l} \equiv \lambda_{1}\left( \bar{u}_{l}   \right)<0,} & {x<0}, \\ {\bar{w}_{r} \equiv \lambda_{1}\left( \bar{u}_{r}\right)<0,} & {x>0},\end{array}\right. \\
	\end{array} \right.
	\label{2.6}\end{equation}\\
with $\bar{w}_{l}<\bar{w}_{r}<0$, (\ref{2.6})    has a continuous weak solution of the form  $w^{r}\left(\frac{x}{t}\right)$ given by
\begin{align*}
	w^{r}(\xi)=\left\{\begin{array}{ll}{\bar{w}_{l},} & {\xi \leq \bar{w}_{l}}, \\
		{\xi,} & {\bar{w}_{l} \leq \xi \leq \bar{w}_{r}}, \\
		{\bar{w}_{r},} & {\xi \geq \bar{w}_{r}}.
	\end{array}\right.
\end{align*}
It is easy to see that the unique solution $u^{r}(t, x)$  of the Riemann problem (\ref{2.1}), (\ref{2.2}) is given by
\begin{align*}
	\left\{\begin{array}{l}{u^{r}(x, t)-\bar{u}_{l}=-\int_{\bar{v}_{l}}^{v^{r}(x, t)} \lambda_{1}(v) d v}, \\ {\lambda_{1}\left(v^{r}\right)(x, t)=w^{r}(x, t)}.
	\end{array}\right.
\end{align*}
We approximate    $w^{r}\left(\frac{x}{t}\right)$   by the solution    $ w(t, x)$    of the following problem
\begin{align}
	\left\{\begin{array}{l}{w_{t}+w w_{x}=0} \\ {w(0, x)=w_{0}(x) \equiv \hat{w}+\tilde{w}   \tanh  x,  }\end{array}\right.
	\label{2.9}
\end{align}
where $\hat{w}=\frac{\bar{w}_{r}+\bar{w}_{l}}{2}$  , $\tilde{w}=\frac{\bar{w}_{r}-\bar{w}_{l}}{2}>0$. Using (\ref{2.9}) we can construct $({}{V^r},{}{U^r})$ below
\begin{align*}
	\left\{\begin{array}{l}{{}{U^r}(x, t)-\bar{u}_{l}=-\int_{\bar{v}_{l}}^{{}{ V^r}(x, t)} \lambda_{1}(v) d v}, \\ {\lambda_{1}\left({}{V^r}\right)(x, t)=w(x, t)}.
	\end{array}\right.
\end{align*}

\begin{lma} \label{xingzhi}(\cite{HP1999})
	Under  the condition $(\ref{2.4})$, there exists a smooth function $(V^r(x,t),U^r(x,t))$, which is the smooth approximation of $(u^r, u^r)$ satisfying  the following properties \\
	\begin{enumerate}
		\item
		\begin{eqnarray}\label{m2.11}
			\left\{\begin{array}{l}{V^r_{t}-U^r_{x}=0}, \\ {U^r_{t}+\left(p_{R}(V^r)\right)_{x}=0},
			\end{array}\right.
		\end{eqnarray}
		\item \begin{eqnarray}\label{2.12}
			\lim _{t \rightarrow+\infty} \sup _{x \in \R }\left\{\left|v^{r}(x, t)-V^r(x, t)\right|+\left|u^{r}(x, t)-U^r(x, t)\right|\right\}=0,
		\end{eqnarray}
		\item \begin{eqnarray}\label{2.13}
			\frac{\partial V^r}{\partial t}>0, \forall x \in  \R , t \geq 0
		\end{eqnarray}
		\item $\forall p \in[1,+\infty], \exists c_{p}>0 , s.t.  \forall t \geq 0 ,$
		\begin{equation}\label{2.14}
			\left\|\left(V^r_{x}, U^r_{x}\right)\right\|_{L^{p}} \leq c_{p} \delta^{\frac{1}{p}}(1+t)^{-1+\frac{1}{p}}, \quad\left\|\left(V^r_{x}, U^r_{x}\right)\right\|_{L^{\infty}} \leq c_{\infty} \delta
		\end{equation}where  $\delta $ is the strength of the wave, namely, $ \delta :=\left|\bar{v}_{r}-\bar{v}_{l}\right|+\left|\bar{u}_{r}-\bar{u}_{l}\right| .$
		\item for  $j \geq 2, \forall p \in[1,+\infty), \exists c_{p, j}>0 $, s.t.,  $\forall t \geq 0 $,
		\begin{equation}\label{cccc2.15}
			\left\|\frac{\partial^{j}}{\partial x^{j}}(V^r, U^r)\right\|_{L^{p}} \leq c_{p, j} \delta(1+t)^{-1},
		\end{equation}
		\item $ \exists c>0 $, s.t. \begin{equation}\label{2.15} \left|V^r_{t}\right| \leq c\left|V^r_{x}\right|,\left|U^r_{t}\right| \leq c\left|U^r_{x}\right| ,\end{equation}
	\end{enumerate}
\end{lma}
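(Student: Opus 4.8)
The statement is essentially quoted from \cite{HP1999}, so the plan is to recall the standard construction of a smooth approximate rarefaction wave via the Burgers equation and verify the listed properties one by one; no genuinely new idea is needed, only bookkeeping. First I would solve the smoothed Burgers problem \eqref{2.9} explicitly by characteristics: since $w_0(x)=\hat w+\tilde w\tanh x$ is smooth and strictly increasing with $w_0(\pm\infty)=\bar w_{r/l}$, the characteristics $x=x_0+w_0(x_0)t$ never cross for $t\ge 0$, so $w(x,t)$ is globally smooth, strictly increasing in $x$, satisfies $\bar w_l<w<\bar w_r<0$, and $w_t=-ww_x>0$ because $w<0$ and $w_x>0$. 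Then $(V^r,U^r)$ is defined by inverting $\lambda_1(V^r)=w$ (which is solvable since $\lambda_1(v)=-(-p_R'(v))^{1/2}$ is strictly monotone by $(\ref{2.4})_1$–$(\ref{2.4})_2$) and integrating $U^r-\bar u_l=-\int_{\bar v_l}^{V^r}\lambda_1(v)\,dv$. Differentiating these relations gives $V^r_x=w_x/\lambda_1'(V^r)$, $V^r_t=w_t/\lambda_1'(V^r)$, and $U^r_x=-\lambda_1(V^r)V^r_x=-w\,V^r_x$, from which \eqref{m2.11} follows by a direct computation (the first equation is immediate from the definitions; the second is exactly the Burgers equation for $w$ after the change of variables), and \eqref{2.13} follows from $w_t>0$ together with $\lambda_1'<0$.

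Next I would establish the decay estimates \eqref{2.14} and \eqref{cccc2.15}. The key is the standard pointwise bound on the smoothed Burgers solution: there is $c>0$ with
\begin{equation*}
0<w_x(x,t)\le \frac{c}{1+t},\qquad |w_x(x,t)|\le c\,\tilde w\,e^{-c(|x|-\tilde w\,t)}\ \text{ for }|x|\ge \tilde w\,t,
\end{equation*}
and analogous bounds for higher $x$-derivatives, obtained by differentiating the implicit characteristic representation. Integrating the Gaussian-type / exponential-tail profile in $x$ yields $\|w_x\|_{L^p}\le c_p\,\tilde w^{1/p}(1+t)^{-1+1/p}$, and since $\tilde w$ is comparable to $\delta$ and $\lambda_1'(V^r)$ is bounded away from $0$ and $\infty$ on the compact $v$-range $[c_1,d_1]$, the same estimates transfer to $V^r_x$ and then to $U^r_x=-w\,V^r_x$ (using $|w|$ bounded). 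The higher-order bounds \eqref{cccc2.15} come the same way from the higher $x$-derivative estimates for $w$, chain rule, and boundedness of $p_R$ and its derivatives up to third order as assumed in $(\ref{2.4})$. For \eqref{2.15}, note $V^r_t/V^r_x=w_t/w_x=-w$, which is bounded since $\bar w_l<w<\bar w_r<0$, giving $|V^r_t|\le c|V^r_x|$; and $U^r_t=-(p_R(V^r))_x=-p_R'(V^r)V^r_x$ while $U^r_x=-w V^r_x$, so $U^r_t/U^r_x=p_R'(V^r)/w$ is bounded using $|p_R'|<E$, $(\ref{2.4})_1$ and the lower bound $|w|\ge|\bar w_r|>0$, giving $|U^r_t|\le c|U^r_x|$.

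Finally, for the convergence \eqref{2.12} I would compare $w(x,t)$ with the exact rarefaction profile $w^r(x/t)$: both solve the Burgers equation, and the difference can be controlled by the method of characteristics — outside the fan $\{\bar w_l\le x/t\le\bar w_r\}$ the exponential tails of $w-w^r$ decay, while inside the fan one uses $w(x,t)=x/t+O(1/t)$ uniformly, so $\sup_x|w(x,t)-w^r(x/t)|\to 0$ as $t\to\infty$; transferring through the monotone, Lipschitz maps $w\mapsto V^r\mapsto U^r$ gives \eqref{2.12}. I expect the only mildly delicate point to be the uniform-in-$x$ decay estimate for $w_x$ (and its higher derivatives) with the correct $\delta$-dependence, i.e.\ the rate $\delta^{1/p}(1+t)^{-1+1/p}$ rather than a cruder bound; this is precisely the place where the explicit $\tanh$ initial datum and the scaling of the characteristic map are used, and everything else is routine calculus or a direct citation of \cite{HP1999, MN1986}.
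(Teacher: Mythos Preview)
The paper does not prove this lemma at all: it is stated with the citation \cite{HP1999} and used as a black box, so there is no argument to compare against. Your plan is the standard construction (smoothed Burgers data, characteristics, then push through the diffeomorphism $\lambda_1$) and is correct in substance; it is exactly what one finds in \cite{HP1999,MN1986}.

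One small slip: in your derivation of \eqref{2.13} you write ``$w_t>0$ together with $\lambda_1'<0$''. In fact $\lambda_1(v)=-\sqrt{-p_R'(v)}$ gives $\lambda_1'(v)=\dfrac{p_R''(v)}{2\sqrt{-p_R'(v)}}>0$ by $(\ref{2.4})_1$--$(\ref{2.4})_2$, so $V^r_t=w_t/\lambda_1'(V^r)>0$ follows from $w_t>0$ and $\lambda_1'>0$, not $\lambda_1'<0$. (With your sign you would get $V^r_t<0$.) Everything else---the $L^p$ decay via the explicit $\tanh$ characteristic map, the transfer to $(V^r,U^r)$ through the bounded, nondegenerate map $\lambda_1$, the ratio computations for \eqref{2.15}, and the $L^\infty$ convergence to the centered fan---is routine and matches the cited references.
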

\subsection{ Suitable   Ansatz}
We construct weighted function below
\begin{equation}\label{g2.11}
	g_{1}(x,t):=\frac{V^r(x,t)-\bar{v}_{l}} {\bar{v}_{r}-\bar{v}_{l}}, \quad g_{2}(x,t):=\frac{U^r(x,t)-\bar{u}_{l}} {\bar{u}_{r}-\bar{u}_{l}}.
\end{equation}
Obviously, it holds that
$$\lim_{x\rightarrow -\infty} g_{j} (x,t)=0, \lim_{x \rightarrow +\infty} g_{j} (x,t)=1; \quad j=1,2.$$
We assume that the function $(v_i,u_i)(x,t) $  is the solution of \eqref{1.1}$_{1,2}$ with the  initial data ($i=l,r$):
\begin{align*}
	\begin{split}
		v_{i0}(x)&:=\bar{u}_{i}+\phi_{0i}(x),\\
		u_{i0}(x)&:= \bar{u}_{i}+\psi_{0i}(x).
	\end{split}&
\end{align*}
\begin{lma}\label{Lem-periodic}(\cite{HY2021})
	Assume that $ (v_0,u_0)(x)\in H^k(0,\pi) $ with $ k\geq 2 $ is periodic with period $ \pi>0 $ and average $ (\bar{v},\bar{u}). $ Then there exists $ \epsilon_0>0 $ such that if
	\begin{equation*}
		\epsilon:=\|{(v_0,u_0)-(\bar{v},\bar{u}) }\|_{H^k(0,\pi)} \leq \epsilon_0,
	\end{equation*}
	the problem \eqref{1.1}$_{1,2}$ with initial data $ (v_0,u_0) $ admits a unique periodic solution $$ (v,u)(x,t) \in C(0,+\infty;H^k(0,\pi)), $$ which has same period and average as $ (v_0,u_0). $ Moreover, it holds that
	\begin{equation}\label{decay-per}
		\|(v,u)-(\bar{v},\bar{u})\|_{H^k(0,\pi)}(t) \leq C \epsilon e^{-\alpha t}, \quad t\geq 0,
	\end{equation}
	where the constants $ C>0 $ and $ \alpha>0 $ are independent of $ \epsilon $ and $ t. $
\end{lma}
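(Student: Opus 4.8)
The plan is the classical three-step scheme for small-data global existence — local well-posedness, a uniform a priori estimate, and a continuation argument — the one non-routine point being that the a priori estimate must produce \emph{exponential} decay. This is possible because the relaxation mechanism, activated by the subcharacteristic condition $(\ref{2.4})_3$ (i.e.\ $|p_R'(v)|<E$), is genuinely dissipative, and because on a single periodic cell the Poincar\'e inequality upgrades derivative-level dissipation to full-norm dissipation.

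First I would set up local existence. Regarding the periodic datum as living on the torus of length $\pi$ and using $H^k(0,\pi)\hookrightarrow C^1$ for $k\geq2$, the standard theory for quasilinear symmetrizable hyperbolic systems with a semilinear relaxation source (Kato's semigroup method, or the usual linearized iteration in $C([0,T];H^k)$) yields a unique solution on some $[0,T_0]$ with $T_0=T_0(\epsilon)>0$, which stays periodic with period $\pi$ by translation invariance of the equations together with uniqueness. Integrating the conservation forms $v_t=u_x$ and $u_t+p_x=0$ over one period gives $\overline v(t)\equiv\bar v$ and $\overline u(t)\equiv\bar u$, so the solution keeps the prescribed average; uniqueness on the whole existence interval then follows from a Gr\"onwall estimate on the $L^2$-difference of two solutions.

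The heart of the matter is the a priori estimate. Write $(\phi,\psi,\chi):=(v-\bar v,\,u-\bar u,\,p-\bar p)$ with $\bar p:=p_R(\bar v)$ (so that the prescribed datum has $\|(\phi,\psi,\chi)\|_{H^k}(0)\le C\epsilon$ by $(\ref{2.4})_4$), and set $N(t):=\sup_{[0,t]}\|(\phi,\psi,\chi)\|_{H^k}$. I claim there are $\epsilon_0,C_0,\alpha_0>0$ such that any solution on $[0,T]$ with $N(T)\le\epsilon_0$ satisfies $\|(\phi,\psi,\chi)\|_{H^k}^2(t)\le C_0\epsilon^2e^{-\alpha_0t}$ on $[0,T]$; this is the weighted energy method. (i) The linearization about $(\bar v,\bar u,\bar p)$ is a constant-coefficient relaxation system for which $(\ref{2.4})_3$ is exactly the condition providing a convex entropy / a positive symmetrizer whose source dissipates the non-equilibrium variable $q:=\chi-p_R'(\bar v)\phi$; multiplying the perturbation equations by this symmetrizer and integrating over $(0,\pi)$ gives $\tfrac{d}{dt}\mathcal E_0+c\|q\|_{L^2}^2\le CN(t)\,\|\partial_x(\phi,\psi,\chi)\|_{L^2}^2$. (ii) The dissipation missing in the equilibrium variables $(\phi,\psi)$ is recovered by the Kawashima--Shizuta compensating-function device — adding small multiples of cross functionals (of the type $\int\psi\,\phi_x\,dx$, $\int\psi\,\chi_x\,dx$) whose time derivative generates $\|\partial_x\phi\|_{L^2}^2+\|\partial_x\psi\|_{L^2}^2$ — again available precisely under $(\ref{2.4})_3$. (iii) Differentiating the equations up to order $k$ and bounding all nonlinear contributions by $CN(t)$ times the dissipation via Moser-type product estimates, one obtains, for an energy $\mathcal E(t)\sim\|(\phi,\psi,\chi)\|_{H^k}^2$,
\[
\frac{d}{dt}\mathcal E(t)+c_1\mathcal D(t)\le CN(t)\,\mathcal D(t),\qquad \mathcal D(t):=\|q\|_{L^2}^2+\|\partial_x(\phi,\psi,\chi)\|_{H^{k-1}}^2 .
\]
Since $\overline\phi=\overline\psi=0$, Poincar\'e's inequality gives $\|\phi\|_{L^2}+\|\psi\|_{L^2}\lesssim\|\partial_x\phi\|_{L^2}+\|\partial_x\psi\|_{L^2}$, and then $\|\chi\|_{L^2}\le\|q\|_{L^2}+|p_R'(\bar v)|\,\|\phi\|_{L^2}$, whence $\mathcal D(t)\gtrsim\|(\phi,\psi,\chi)\|_{H^k}^2\gtrsim\mathcal E(t)$; choosing $\epsilon_0$ so small that $CN(T)\le c_1/2$ yields $\tfrac{d}{dt}\mathcal E+\alpha_0\mathcal E\le0$, hence $\mathcal E(t)\le\mathcal E(0)e^{-\alpha_0t}\le C_0\epsilon^2e^{-\alpha_0t}$, which is the claim and, on taking square roots, \eqref{decay-per}.

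Finally, a routine continuity argument closes the loop: if $\epsilon\le\epsilon_0/\sqrt{C_0}$, the a priori bound keeps $N(t)$ strictly below $\epsilon_0$ on every interval of existence, so the local solution extends to $[0,\infty)$ with the exponential estimate intact, and $C_0,\alpha_0$ depend only on the structural constants in $(\ref{2.4})$, not on $\epsilon$ or $t$. The main obstacle is step (ii): the relaxation source by itself dissipates only the one scalar combination $q$, so recovering dissipation for the two equilibrium components $(\phi,\psi)$ forces one to exploit the Kawashima/subcharacteristic structure through a delicately balanced auxiliary functional; it is also here that the periodic geometry is essential, since only the Poincar\'e inequality converts the derivative-level dissipation this produces into the full-norm dissipation needed for an \emph{exponential}, rather than merely algebraic, rate.
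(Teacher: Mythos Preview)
The paper does not prove this lemma; it is quoted from \cite{HY2021} (where the underlying system is the one-dimensional isentropic Navier--Stokes equations, with physical viscosity), so there is no in-paper argument to compare your proposal against.

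Your proposal, however, does not prove the statement as written. The lemma concerns the $2\times2$ problem ``\eqref{1.1}$_{1,2}$'' with data and solution $(v,u)$ only; in the way it is actually used in \eqref{2.11}--\eqref{x2.11} this is the equilibrium $p$-system \eqref{1.2}. Your scheme instead introduces a third unknown $\chi=p-\bar p$, and every dissipative ingredient you invoke --- the relaxation of the non-equilibrium variable $q=\chi-p_R'(\bar v)\phi$, the subcharacteristic condition \eqref{2.4}$_3$, the Kawashima compensating functional --- lives in equation \eqref{1.1}$_3$, which is \emph{not} part of the system in the lemma. For the bare $p$-system \eqref{1.2} the claimed conclusion is in fact false: the linearized energy is conserved, there is no damping, and by genuine nonlinearity small smooth periodic data develop shocks in finite time, so no global $C(0,\infty;H^k)$ solution with exponential decay can exist. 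The exponential rate in \cite{HY2021} is produced by viscosity, not by relaxation.

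If what you intend to establish is the analogous result for the full $3\times3$ system \eqref{1.1} (say with $p_0=p_R(v_0)$ prescribed), your Kawashima--Shizuta plus Poincar\'e outline is the correct template and would go through. But that is a different statement from the one printed here, and you should flag the discrepancy between the lemma as written, the viscous system in the cited reference, and the relaxation system your argument actually treats.
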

Motivated by \cite{HXY2022}, we construct an ansatz below
\begin{align}\label{x2.6}
	\begin{split}
		V(x,t)&:= v_{l}(x,t)g_{1}(x,t)+v_{r}(x,t)[1-g_{1}(x,t)],\\
		U(x,t)&:= u_{l}(x,t)g_{2}(x,t)+u_{r}(x,t)[1-g_{2}(x,t)],\\
		P(x,t)&:= P_R(V).
	\end{split}&
\end{align}
By direly calculation, one obtain that
\begin{equation}\label{2.11}
	\left\{\begin{array}{l}{V_{t}-U_{x}=h_1}, \\ {U_{t}+\left(p_{R}(V)\right)_{x}=h_2},
	\end{array}\right.
\end{equation}
where
\begin{align}\label{x2.11}
	\begin{split}
		h_1=& (v_r-\bar{v}_r+\bar{v}_l-v_l)g_{1t}-(u_r-\bar{u}_r+\bar{u}_l-u_l)g_{2t},\\
		&+(u_r-u_l)_x(g_1-g_2),\\
		h_2= &(v_r-\bar{v}_r+\bar{v}_l-v_l)g_{2x}+[p'_R(V)-p'_R(v_l)] v_{lx} (1-g_1)\\
		&+[p'_R(V)-p'_R(v_r)] v_{rx} (g_1) + [p'_R(V)-p'_R(V^r)] V_{x}.
	\end{split}&
\end{align}
Note that $(v_i,u_{i}), i=l,r$ is a periodic solution of \eqref{1.1}$_{1,2}$ with the initial data $(\bar{v}_{i}+\phi_{0 i}(x),\bar{u}_{i}+\psi_{0 i}(x))$ and is expected to have the same oscillation as $(v,u)$ near $x= \mp\infty$, respectively.  Thus $(V,U,P )(x,t)$ is expected to have the same oscillation as $(v,u,p)(x,t)$ at the far fields.
\begin{lma}\label{Lem-H}
	Under the assumptions of Lemma \ref{Lem-periodic} , it holds that
	\begin{equation*}
		\|{h_1}\|_{L^1}(t),	\|{h_1}\|_1(t), \|{h_2}\|(t),\|{h_{2t}}\|(t) \leq C \epsilon e^{-\alpha t}, \quad t\geq 0,
	\end{equation*}
	where $ C>0 $ is independent of $ \epsilon $ and $ t, $ and $ \alpha>0 $ is the constant given in   Lemma \ref{Lem-periodic}.
\end{lma}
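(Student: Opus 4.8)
The plan is to check, for each of $h_1$, $h_{1x}$, $h_2$, $h_{2t}$, that after unwinding the definitions \eqref{g2.11}, \eqref{x2.6} and using the conservation laws \eqref{m2.11} and \eqref{1.1}$_{1,2}$, every term decomposes as (a factor decaying like $e^{-\alpha t}$)$\,\times\,$(harmless factors), and then to finish by H\"older's inequality. Two facts supply the smallness. From Lemma \ref{Lem-periodic} (with $k\ge 2$): since $v_i-\bar v_i,u_i-\bar u_i$ ($i=l,r$) are $x$-periodic, $\|v_i-\bar v_i\|_{L^\infty(\R)},\|v_{ix}\|_{L^\infty(\R)},\|u_i-\bar u_i\|_{L^\infty(\R)},\|u_{ix}\|_{L^\infty(\R)}$ coincide with the corresponding norms over one period and hence are $\le C\epsilon e^{-\alpha t}$ by \eqref{decay-per}, while $\|v_{ixx}\|_{L^2(0,\pi_i)},\|u_{ixx}\|_{L^2(0,\pi_i)}$ and --- using $v_{it}=u_{ix}$, $u_{it}=-(p_R(v_i))_x$ and the boundedness in \eqref{2.4} --- all the mixed derivatives $v_{it},u_{it},v_{itx},u_{itx},\dots$ obey the same exponential bound. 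From Lemma \ref{xingzhi}: $0\le g_1,g_2\le 1$; the cutoff derivatives $g_{jx},g_{jt}$ are constant multiples of $V^r_x,U^r_x$ and $g_{jxx},g_{jtx},\dots$ of the second derivatives of $(V^r,U^r)$ (via $V^r_t=U^r_x$), hence satisfy the decay estimates \eqref{2.14}, \eqref{cccc2.15}; and $g_j(1-g_j)$, $g_1-g_2$ are bounded, vanish as $x\to\pm\infty$ and, up to exponentially small tails, are supported in the rarefaction region, whose width in $x$ is $O(\delta(1+t))$, so their $L^1,L^2$ norms on $\R$ grow at most polynomially in $t$.

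The main step is the rewriting. In $h_1$ the coefficients $v_r-\bar v_r+\bar v_l-v_l=(v_r-\bar v_r)-(v_l-\bar v_l)$ and $(u_r-u_l)_x=u_{rx}-u_{lx}$ are already exponentially small in $L^\infty$, so each of the three terms of $h_1$ is directly of the form (exponentially small)$\,\times\,$($g_{jt}$ or $g_1-g_2$). In $h_2$ one must produce the corresponding cancellations: writing $v_l-v_r$ and $u_l-u_r$ as oscillatory-small plus constant, the constant parts combine with $g_{1x}$ to give $p'_R(V)V^r_x$ and with $g_{2t}$ to give $-U^r_t=p'_R(V^r)V^r_x$ (by \eqref{m2.11}), whose difference is $[p'_R(V)-p'_R(V^r)]V^r_x$; and since $V-V^r$ is a pointwise convex combination of $v_l-\bar v_l$ and $v_r-\bar v_r$, one has $\|p'_R(V)-p'_R(V^r)\|_{L^\infty}\le C\epsilon e^{-\alpha t}$. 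In the other terms of $h_2$ one uses $p'_R(V)-p'_R(v_i)=p''_R(\cdot)(V-v_i)$ with $V-v_l,V-v_r$ each a bounded multiple of a single cutoff, so that after multiplication by the cutoff already present the localizing product $g_1(1-g_1)$ emerges. The net result is that $h_1,h_2$ --- and, after one further differentiation, $h_{1x},h_{2t}$ --- are finite sums of terms of the schematic shape (exponentially small factor)$\,\times\,$(bounded factor)$\,\times\,$(a derivative of $(V^r,U^r)$, or $g_1(1-g_1)$, or $g_1-g_2$).

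Given this, H\"older's inequality concludes: $L^\infty\!\cdot\!L^1\subset L^1$ for $\|h_1\|_{L^1}$, and $L^\infty\!\cdot\!L^2\subset L^2$ for $\|h_1\|_1,\|h_2\|,\|h_{2t}\|$; when the small factor is only controlled in $L^2$ over one period (the second-derivative cases arising in $h_{1x},h_{2t}$), one splits $\R$ into periods, bounds the contribution of each by the local $L^2$ norm times the supremum of the accompanying localizing factor there, and sums over the $O(\delta(1+t))$ periods that meet its support. Every term is thus $\le C\epsilon\,q(t)e^{-\alpha t}$ with $q$ a polynomial (possibly with extra negative powers of $1+t$ from \eqref{2.14}, \eqref{cccc2.15}); since $q(t)e^{-\alpha t}\le C_{\alpha'}e^{-\alpha't}$ for every $0<\alpha'<\alpha$, summing the finitely many terms gives the assertion, with $\alpha$ decreased if necessary to an arbitrarily close smaller value still denoted $\alpha$.

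The step I expect to be the genuine obstacle is exactly this rewriting. Several ``naive'' terms are not individually in $L^2(\R)$ --- e.g. $v_{lx}(1-g_1)$ and $(v_l-\bar v_l)v_{lx}(1-g_1)^2$ carry non-integrable periodic tails at $x=-\infty$, and $V^r_x$ by itself does not decay in $t$ --- so one must organise the computation so that such pieces only ever appear inside the controllable combinations $g_1(1-g_1)$, $g_1-g_2$, and $[p'_R(V)-p'_R(V^r)]V^r_x$. This also requires keeping the sign conventions in \eqref{g2.11}--\eqref{x2.6} straight (the cutoff multiplying $(v_l,u_l)$ must tend to $1$ as $x\to-\infty$, so that $(V,U)\to(v_l,u_l)$ there). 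Once the decomposition is in place, the remaining estimates are routine applications of Lemmas \ref{xingzhi} and \ref{Lem-periodic}.
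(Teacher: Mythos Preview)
Your plan is correct and follows the same strategy as the paper's appendix: factor each term of $h_1, h_{1x}, h_2, h_{2t}$ as (exponentially small periodic piece) $\times$ (localizing cutoff piece), then apply H\"older. You are in fact more careful than the paper --- you correctly insist on extracting the $g_1(1-g_1)$ localization from $[p'_R(V)-p'_R(v_i)]\cdot(\text{cutoff})$ in $h_2$ (the paper's appendix first writes $|(V-v_l)(1-g_1)|$ but then overestimates it by $C|1-g_1|$, which destroys integrability), and you account for the polynomial growth of $\|g_1(1-g_1)\|_{L^2}$ and $\|g_1-g_2\|_{L^2}$ via the slight reduction of $\alpha$, a point the paper glosses over.
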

\subsection{Main Theorem }
We can define   the perturbation   by
\begin{equation*}
	(\phi, \psi, w)(x,0)=(v, u, p)(x,0)-(V, U, P_{R}(V))(x,0).
\end{equation*}
so that $(\phi_{0}, \psi_{0}, w_{0})$ belongs to some Sobolev space. We assume  that the initial data satisfies
\begin{equation}\label{x2017}
	(\phi_{0}, \psi_{0}, w_{0})(x):=(\phi, \psi, w)(x,0)\in H^{1}(\mathbb{R}).
\end{equation}
The main result is
\begin{thm}\label{theorem201}
	Suppose $(\bar{v}_{l},\bar{u}_{l}) $ and $(\bar{v}_{r}, \bar{u}_{r})$ can be connected by $(V^{r},U^{r})$,   (\ref{2.4}) and  (\ref{x2017}) hold, then there exist positive constants $\delta_{0}$, $\epsilon_{0}$ and $\varepsilon_{0}$, such that if $\epsilon< \epsilon_{0}$, $\delta< \delta_0$ and $$\| \left(\phi_{0}, \psi_{0}, w_{0}\right) \|_{1} \leq \varepsilon_{0},$$
then the problem(\ref{1.1})-(\ref{end-behavior})    admits a unique global solution satisfying
\begin{equation}\label{ads2.11}
	\left\{\begin{array}{l}{\left(   v-V ,u-U,p-P_R(V)\right)(x,t)\in C^0([0,+\infty);H^1   )    }, \\
		{\left(   (v-V)_{x} ,(u-U)_{x},(p-P_R(V))_{x}\right)(x,t)\in   L^2([0,+\infty);H^1  )},
	\end{array}\right.
\end{equation}
and
\begin{equation}\label{ads2.12}
	\begin{aligned}
		&\sup_{x \in \mathbb{R}}|  (v-V^r,u-U^r,p-P_{R}(V^r))(x,t)|\rightarrow 0,\quad \forall t\rightarrow \infty.
	\end{aligned}
\end{equation}
\end{thm}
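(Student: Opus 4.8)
The plan is to prove Theorem \ref{theorem201} by the classical continuation argument: local existence in $H^1$ together with uniform a priori estimates gives the global solution in the class \eqref{ads2.11}, and the large-time behavior \eqref{ads2.12} then follows from the dissipation captured by those estimates. First I would reformulate \eqref{1.1}--\eqref{end-behavior} as a perturbation problem around the ansatz \eqref{x2.6}. Setting $(\phi,\psi,w):=(v,u,p)-(V,U,P_R(V))$, subtracting \eqref{2.11} from the first two equations of \eqref{1.1} and subtracting the relaxation defect $h_3:=(P_R(V)+EV)_t=(p_R'(V)+E)V_t$ from the third, one obtains a system of the schematic form
\begin{equation*}
\phi_t-\psi_x=-h_1,\qquad \psi_t+w_x=-h_2,\qquad (w+E\phi)_t+w=p_R'(V)\phi+N(\phi)-h_3,
\end{equation*}
where $N(\phi)=p_R(V+\phi)-p_R(V)-p_R'(V)\phi=O(\phi^2)$ is quadratic. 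The forcing splits into two families: the periodic defects $h_1,h_2$, which by Lemma \ref{Lem-H} are $O(\epsilon e^{-\alpha t})$ in $L^1\cap H^1$, resp.\ $L^2$; and the rarefaction defect $h_3$, which via $V_t=U_x+h_1$ and the bounds \eqref{2.14}, \eqref{2.15} of Lemma \ref{xingzhi} satisfies $\|h_3\|_{L^p}\le C\delta^{1/p}(1+t)^{-1+1/p}+C\epsilon e^{-\alpha t}$. Since in the variables $(v,u,p+Ev)$ the system \eqref{1.1} is semilinear with a constant-coefficient (hence hyperbolic, $E>0$) principal part and a globally Lipschitz source by \eqref{2.4}, the standard iteration scheme gives a unique local solution with $(\phi,\psi,w)\in C^0([0,T_0];H^1(\R))$; so the whole weight of the theorem lies in the a priori estimate.

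For the a priori estimate, put $N(T)^2:=\sup_{0\le t\le T}\|(\phi,\psi,w)(t)\|_1^2$ and suppose $N(T)\le\nu$ for a small $\nu$. The target is the closed inequality
\begin{equation*}
N(T)^2+\int_0^T\mathcal D(t)\,dt\le C\|(\phi_0,\psi_0,w_0)\|_1^2+C(\delta+\epsilon+\nu)\Big(N(T)^2+\int_0^T\mathcal D(t)\,dt\Big)+C\epsilon,
\end{equation*}
where $\mathcal D(t)$ collects $\|w\|^2+\|(\phi_x,\psi_x,w_x)\|^2+\int_{\R}V^r_t\,(\phi^2+\psi^2)\,dx$. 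For the zeroth-order step I would multiply the three perturbation equations by the entropy variables of the Suliciu structure and integrate, producing (i) the relaxation dissipation $\int_0^t\|w-p_R'(V)\phi\|^2\,ds$ from the damping $+w$, and (ii), crucially, the Matsumura--Nishihara good term $+\int_0^t\!\int V^r_t(\cdots)\phi^2\,dx\,ds$ coming from $V^r_t>0$ (Lemma \ref{xingzhi}(3)) and the convexity $p_R''>0$ from $(\ref{2.4})_2$. For the first-order step I would differentiate the system in $x$ and repeat the energy computation for $(\phi_x,\psi_x,w_x)$, recovering derivative dissipation by a Kawashima-type correction, e.g.\ estimating $\frac{d}{dt}\int\psi_x(w_x+E\phi_x)\,dx$, which under the sub-characteristic condition $|p_R'(v)|<E$ from $(\ref{2.4})_3$ generates $-c\int\psi_x^2$ and, through the differentiated first equation, partial control of $\phi_x$. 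The error terms are absorbed as follows: the $O(\epsilon e^{-\alpha t})$ contributions of $h_1,h_2$ are bounded by Young's inequality and integrated against $e^{-\alpha t}$ (yielding the $C\epsilon$ term, the rest hidden in $\mathcal D$ by Gronwall); the $O(\delta)$ terms carrying $V^r_x$ or $V^r_t$ (including $h_3$ and the coefficient differences $p_R'(V)-p_R'(V^r)$, $p_R'(V)-p_R'(v_i)$) either have a favorable sign, or are majorized by the rarefaction dissipation, or are controlled through $\|V^r_x\|_{L^\infty}\le c_\infty\delta$ paired with a square of the perturbation; and the cubic remainder from $N(\phi)=O(\phi^2)$ is handled by $H^1(\R)\hookrightarrow L^\infty(\R)$ and smallness of $\nu$. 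Choosing $\delta_0,\epsilon_0,\varepsilon_0,\nu$ small closes the estimate, and with local existence this gives the global solution in \eqref{ads2.11}.

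For the large-time behavior, the a priori bound gives $\int_0^\infty(\|w\|^2+\|(\phi_x,\psi_x,w_x)\|^2)\,dt<\infty$, while the equations show $\frac{d}{dt}\|(\phi,\psi,w)(t)\|_1^2\in L^1(0,\infty)$; hence $\|(\phi,\psi,w)(t)\|_1\to0$, so $\|(\phi,\psi,w)(t)\|_{L^\infty}\to0$ by Sobolev embedding. Writing $(v,u,p)-(V^r,U^r,P_R(V^r))=(\phi,\psi,w)+\big[(V,U,P_R(V))-(V^r,U^r,P_R(V^r))\big]$, it remains to see that the bracket tends to $0$ uniformly: by Lemma \ref{Lem-periodic}, $\|(v_i,u_i)(t)-(\bar v_i,\bar u_i)\|_{L^\infty}\le C\|(v_i,u_i)(t)-(\bar v_i,\bar u_i)\|_{H^1(0,\pi)}\le C\epsilon e^{-\alpha t}$ for $i=l,r$, and the ansatz \eqref{x2.6} reduces exactly to $(V^r,U^r,P_R(V^r))$ when the periodic pieces $(v_i,u_i)$ are replaced by the constants $(\bar v_i,\bar u_i)$; since the weights $g_1,g_2$ are uniformly bounded, the bracket is $O(\epsilon e^{-\alpha t})$ in $L^\infty$. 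Combining the two facts yields \eqref{ads2.12}.

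I expect the a priori $H^1$ estimate to be the real difficulty. Closing it requires simultaneously exploiting three distinct dissipation mechanisms --- the relaxation damping on $w$, the expansion sign $V^r_t>0$ of the rarefaction for the equilibrium variables, and the sub-characteristic condition $|p_R'|<E$ to regain derivative dissipation --- while ensuring that neither the $O(\delta)$ rarefaction interaction terms (whose $L^2$ norms decay only like $(1+t)^{-1/2}$, so that $\int_0^T\|\cdot\|^2\,dt$ grows logarithmically if handled naively) nor the $O(\epsilon e^{-\alpha t})$ periodic defects overwhelm the dissipation. The far-field oscillation is already neutralized by the ansatz, but the price is that the exponential decay of Lemma \ref{Lem-periodic} must be threaded through every $h_i$ so that the Gronwall/absorption bookkeeping actually closes; getting the $\delta$-weighted terms to be dominated by the rarefaction good term $\int V^r_t(\cdots)\phi^2$ is the technical heart.
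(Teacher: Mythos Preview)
Your overall architecture --- reformulation around the ansatz, local existence plus uniform $H^1$ a priori estimates, continuation, then large-time behavior from the dissipation --- matches the paper, and your identification of the three dissipation mechanisms (relaxation damping, rarefaction expansion $V^r_t>0$, sub-characteristic condition $|p_R'|<E$) is on target. The route to the a priori estimate, however, is genuinely different. The paper does \emph{not} use an entropy multiplication followed by a Kawashima correction obtained by differentiating in $x$. Instead it eliminates $w$ from the last two perturbation equations to get a damped wave equation for $\psi$, namely $\psi_{tt}-E\psi_{xx}+\psi_t=A(V,\phi)_x+B(V,U_x)_x+\cdots$, multiplies this by $\mu\psi_t+\psi$ with the specific constant $\mu=(E+E_1)/(2E_1)$, $E_1=\sup|p_R'|$, and simultaneously multiplies the $\phi$-equation by $A(V,\phi)=p_R(V)-p_R(V+\phi)$; the sub-characteristic gap $E>E_1$ makes the resulting energy form coercive in $(\psi,\psi_t,\psi_x,\phi)$ and produces the good rarefaction term $\int V^r_t\,\phi^2$ directly. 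Control of $w$ then comes separately by differentiating the third perturbation equation in $t$ (not $x$) and using $w_x=-\psi_t-h_2$. Your entropy/Kawashima scheme should also close in principle, but the paper's multiplier is more economical for this particular system: it yields $H^1$ control of $\psi$ at the zeroth-order step, without ever $x$-differentiating the system, and the rarefaction forcing $B$ is split as $\tilde B+(B-\tilde B)$ so that the $(1+t)^{-1/2}$ issue you flag never arises.

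There is one genuine slip in your large-time step. From $\int_0^\infty\big(\|w\|^2+\|(\phi_x,\psi_x,w_x)\|^2\big)\,dt<\infty$ and $\frac{d}{dt}\|(\phi,\psi,w)\|_1^2\in L^1(0,\infty)$ you \emph{cannot} conclude $\|(\phi,\psi,w)\|_1\to0$: the quantities $\|\phi\|^2$ and $\|\psi\|^2$ are not known to be time-integrable, so at best the full $H^1$ norm has a limit, not a vanishing one. The paper instead shows only that $\|(\phi_x,\psi_x,w_x)\|^2$ lies in $L^1(0,\infty)\cap BV(0,\infty)$ and hence tends to $0$, and then uses the interpolation $\|f\|_{L^\infty}^2\le 2\|f\|\,\|f_x\|$ together with the uniform bound on $\|(\phi,\psi,w)\|$ to get $\|(\phi,\psi,w)\|_{L^\infty}\to0$. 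Your final reduction from $(V,U,P_R(V))$ to $(V^r,U^r,P_R(V^r))$ via the exponential decay of the periodic pieces is exactly what the paper does.
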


\section{Reformulation of the problem}\label{section3}

To prove this theorem, we define
\begin{eqnarray*}
	(\phi, \psi, w)=(v, u, p)-(V, U, P_{R}(V)).
\end{eqnarray*}
Using (\ref{1.1}) and (\ref{2.11}), one gets that
\begin{align}
	\left\{\begin{array}{l}{\phi_{t}-\psi_{x}=-h_1}, \\ {\psi_{t}+w_{x}=-h_2}, \\
		{w_{t}+E \psi_{x}+w+A(V, \phi)+B(V,U_x)=-p'_{R}(V)h_1},
	\end{array}\right.
	\label{m3.5}
\end{align}
where $A(V, \phi)=\left(p_{R}(V)-p_{R}(V+\phi)\right),B(V,U_x)=\left(E+p_{R}^{\prime}(V)\right) U_{x}$. The initial data   is
\begin{align}\label{m4.6}
	{(\phi, \psi, w)(x, 0)=(v, u, p)(x, 0)-(V, U, P)(x, 0)}.
\end{align}
We will seek the solutions in the functional space $X(0,T)$ for any $0\leq T<\infty$,
\begin{equation}\label{f}
	\begin{aligned} X_\varepsilon(0 ,T) = \{ ( \phi, \psi,w) & \in C \left( 0 , T ; H ^ { 1 } \right) \left|\left( \phi_ { x }, \psi_x, w_x \right) \in L ^ { 2 } \left( 0 , T ; L ^ { 2 } \right) \right. \\
		&\sup _ { 0 \leq t \leq T } \| ( \phi, \psi,w) ( t ) \| _ {}  \leq \varepsilon \}. \end{aligned}
\end{equation}
where the constant $\varepsilon<<1$ is small. 

\begin{pro}[A priori estimate]\label{prop2.1}
	Suppose that  $(\phi,\psi,w) \in X_\varepsilon(0,T)$ is the solution of systems \eqref{2.3}-\eqref{2.6} for some time $T>0$, there exists a positive constant $\varepsilon_0$ independent of $T$ such that if
	\begin{equation}
		\sup\limits_{t\in[0,T]} \|(\phi,\psi,w)(t)\|_{1}\leq\varepsilon\le\varepsilon_0,
	\end{equation}
	then for any $t\in[0,T]$, it holds that
	\begin{equation}\label{2.7}
		\| ( \phi, \psi,w) ( t ) \| _ { 1 } ^ { 2 } + \int _ { 0 } ^ { t } \left\|\left(\phi_{x},  \psi_{x},  w_{x} \right)(\tau)\right\|^{2} d \tau \leq C_0 \left( \left\| \left( \phi_ { 0 } , \psi_ { 0 },w_{0} \right) \right\| _ { 1 } ^ { 2 }+\delta_{0}+\epsilon_{0}\right).
	\end{equation}
	where $C_0>1$ is a constant independent of T.
\end{pro}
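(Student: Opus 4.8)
The plan is to establish the a priori estimate \eqref{2.7} by a weighted energy method applied to the reformulated system \eqref{m3.5}, closing a continuity argument under the smallness assumptions on $\varepsilon$, $\delta$, and $\epsilon$. First I would derive the basic $L^2$ estimate. Multiplying \eqref{m3.5}$_1$ by an appropriate multiple of $\phi$ (weighted by $-p_R'(V)>0$ or a similar positive quantity to symmetrize the system), \eqref{m3.5}$_2$ by $\psi$, and \eqref{m3.5}$_3$ by $w/E$, then integrating over $\mathbb{R}$ and summing, the flux terms cancel and one obtains an identity of the form $\frac{d}{dt}\int \eta(\phi,\psi,w)\,dx + \int \frac{w^2}{E}\,dx + (\text{good terms from } A, B) = (\text{errors from } h_1,h_2)$. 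The key structural point is that the relaxation term $+w$ in \eqref{m3.5}$_3$ produces the dissipative term $\|w\|^2$, and the term $A(V,\phi)=p_R(V)-p_R(V+\phi)= -p_R'(V)\phi + O(\phi^2)$ combined with $\phi_t-\psi_x=-h_1$ yields, after using \eqref{2.13} ($V^r_t>0$) and the structure of the rarefaction wave, a nonnegative contribution controlling $\int V^r_x \phi^2\,dx$ — this is the classical mechanism by which rarefaction waves are stable. The terms $B(V,U_x)=(E+p_R'(V))U_x$ are treated as quasi-source terms: they are controlled using $|p_R'(v)|<E$ from \eqref{2.4}$_3$ together with the decay $\|U_x\|_{L^p}\le c_p\delta^{1/p}(1+t)^{-1+1/p}$ from \eqref{2.14}, and the error terms on the right-hand side are absorbed using Lemma \ref{Lem-H}, which gives $\|h_1\|_{L^1},\|h_1\|_1,\|h_2\|,\|h_{2t}\|\le C\epsilon e^{-\alpha t}$, hence integrable in time.

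Next I would perform the first-order energy estimate. Differentiating \eqref{m3.5} in $x$, multiplying by $(\phi_x,\psi_x,w_x)$ with the same symmetrizing weights, and integrating, one gets $\frac{d}{dt}\int(\text{first-order energy}) + \|w_x\|^2 + (\text{good terms}) = (\text{errors})$. Here the new difficulties are: (i) commutator terms involving derivatives of $V$, $U$, $V^r$ hitting the coefficients $p_R'(V)$, $E+p_R'(V)$, which are bounded by $\|V_x\|_{L^\infty}\lesssim \epsilon + c_\infty\delta$ and $\|V_{xx}\|_{L^p}\lesssim \delta(1+t)^{-1}+\epsilon e^{-\alpha t}$ via \eqref{2.14}, \eqref{cccc2.15}, Lemma \ref{xingzhi}, and Lemma \ref{Lem-periodic}; (ii) the nonlinear term $A(V,\phi)_x$ which produces cubic terms like $\int \phi\phi_x^2$ bounded by $\varepsilon\|\phi_x\|^2$, absorbable for $\varepsilon$ small; and (iii) the term $\partial_x(p_R'(V)h_1)$ which requires $\|h_1\|_1$, again supplied by Lemma \ref{Lem-H}. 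To control $\|\psi_x\|^2$ and $\|\phi_x\|^2$ (which do not come for free from the $w$-dissipation alone), I would use the equation $w_x = -\psi_t - h_2 - (\text{lower order})$ from \eqref{m3.5}$_2$ and an interaction/cross term — multiplying $\eqref{m3.5}_2$ by $w_x$ or forming $\frac{d}{dt}\int \psi w$ type quantities — to trade the $\psi_x$ dissipation against $\|w\|^2+\|w_x\|^2$; similarly one uses $\eqref{m3.5}_1$ to get $\phi_x$ control from $\psi_x$, at the cost of error terms $\|h_1\|_1$ and terms involving $V^r_x$ which integrate to $O(\delta)$ by \eqref{2.14} with $p=1$.

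The main obstacle I anticipate is the interplay between the rarefaction-wave source terms (which decay only polynomially, $\sim(1+t)^{-1}$, and are only in $L^1_t$ after taking $L^1_x$ norms, not $L^2_t$) and the need to close the first-order estimate: the term $\int (E+p_R'(V))U_x \cdot (\text{something})\,dx$ and its $x$-derivative must be handled delicately. The standard device is to split $(E+p_R'(V))U_x$ using Cauchy–Schwarz with a weight that puts the "bad" part into $\delta^{1/2}(1+t)^{-1/2}\cdot(\text{energy})^{1/2}$ — borderline non-integrable — so one must instead absorb $\int V^r_x|\phi|^2$ etc. into the \emph{good} term coming from the rarefaction structure (cf. \eqref{2.13} and the term $-p_R'(V)\phi$ in $A$), which is precisely why the case $\bar u_l<\bar u_r$ (expansion, $V^r_t>0$) is the tractable one. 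Once all terms are either dissipative, absorbed by smallness ($\varepsilon$), bounded by $C\delta_0$ (from the wave-strength estimates), or bounded by $C\epsilon_0$ (from Lemma \ref{Lem-H}, integrated in time), Gronwall's inequality gives \eqref{2.7} with $C_0$ independent of $T$, completing the a priori estimate.
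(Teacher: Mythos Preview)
Your proposal is sound in spirit but takes a genuinely different route from the paper. You work directly with the first-order system \eqref{m3.5}, obtain $\|w\|^2$ dissipation from the relaxation term, and then recover $\|\psi_x\|^2$ and $\|\phi_x\|^2$ via Kawashima-type cross terms and $x$-differentiation. The paper instead eliminates $w$ from \eqref{m3.5}$_2$--\eqref{m3.5}$_3$ to derive the damped wave equation
\[
\psi_{tt}-E\psi_{xx}+\psi_t-A(V,\phi)_x-B(V,U_x)_x=-h_{2t}-h_2+(p_R'(V)h_1)_x,
\]
multiplies \eqref{m3.5}$_1$ by $A(V,\phi)$ itself (not by $-p_R'(V)\phi$: the point is that $A\,\phi_t$ is an exact $t$-derivative of the relative entropy $\Phi(V,\phi)=P_R(V)\phi-\int_V^{V+\phi}P_R(s)\,ds$, and the commutator with $V_t$ produces precisely the good term $V^r_t(P_R(V+\phi)-P_R(V)-P_R'(V)\phi)\ge a_2 V^r_t\phi^2$), and multiplies the wave equation by $\mu\psi_t+\psi$ with $\mu=\tfrac{E_1+E}{2E_1}$. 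That specific $\mu$ makes $(\mu-1)\psi_t^2$ and $(E+\mu\,p_R'(V+\phi))\psi_x^2$ simultaneously positive, so $\|\psi_t\|^2+\|\psi_x\|^2$ dissipation falls out directly with no cross-term gymnastics. The $w$-estimates are then secondary: $w_x=-\psi_t-h_2$ is read off \eqref{m3.5}$_2$, and $\|w_t\|^2$ comes from differentiating \eqref{m3.5}$_3$ in $t$ and multiplying by $w_t$. Your compensating-function approach should also close, but the paper's wave-equation reduction (inherited from Hsiao--Pan \cite{HP1999}) is more economical for this particular $3\times 3$ system and avoids the $x$-differentiation commutators you flag as the main obstacle.
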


Once Proposition \ref{prop2.1} is proved, we can extend the unique local solution $(\phi,\psi)$ to $T=+\infty$. We have the following lemma

\begin{lma}\label{prop201}
	Suppose $(\bar{v}_{l},\bar{u}_{l}) $ and $(\bar{v}_{r}, \bar{u}_{r})$ can be connected by $(V^{r},U^{r})$,   $(\ref{2.4})$ and (\ref{x2017}) hold, then there exist positive constants $\delta_{0}$, $\epsilon_{0}$ and $\varepsilon_{0}$, such that if $\epsilon< \epsilon_{0}$, $\delta< \delta_0$ and $$\| \left(\phi_{0}, \psi_{0}, w_{0}\right) \|_{1} \leq \varepsilon_{0},$$
	then the Cauchy problem (\ref{2.1})(\ref{2.3}) has a unique global solution $( \phi, \psi,w)\in  X_\varepsilon(0 ,\infty)$  satisfying
	\begin{equation*}
		\| ( \phi, \psi,w) ( t ) \| _ { 1 } ^ { 2 } + \int _ { 0 } ^ { \infty } \left\|\left(\phi_{x},  \psi_{x},  w_{x} \right)(\tau)\right\|^{2} d \tau \leq C_0 \left( \left\| \left( \phi_ { 0 } , \psi_ { 0 },w_{0} \right) \right\| _ { 1 } ^ { 2 }+\delta_{0}+\epsilon_{0}\right).
	\end{equation*}
	where $C_0>1$ is a constant independent of T.
\end{lma}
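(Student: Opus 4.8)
The plan is to derive Lemma~\ref{prop201} from the a priori estimate in Proposition~\ref{prop2.1} by a standard continuation argument, after first noting that the local existence of a solution in $X_\varepsilon(0,T)$ follows from the reformulated system \eqref{m3.5}--\eqref{m4.6} together with the fact that, by \eqref{x2017}, the initial perturbation $(\phi_0,\psi_0,w_0)$ lies in $H^1(\R)$. The key observation is that if we choose $\varepsilon_0$, $\delta_0$ and the bound $\varepsilon_0$ on $\|(\phi_0,\psi_0,w_0)\|_1$ small enough relative to the constant $C_0$ from Proposition~\ref{prop2.1}, then the a priori bound \eqref{2.7} closes on itself: namely, we require
\begin{equation*}
	C_0\bigl(\varepsilon_0^2+\delta_0+\epsilon_0\bigr)<\varepsilon^2,
\end{equation*}
so that the $H^1$ norm of the solution is strictly improved from $\varepsilon$ to something bounded away from $\varepsilon$ on any interval where the solution exists in $X_\varepsilon$.

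First I would invoke the local existence theory for \eqref{m3.5} with data in $H^1$: for data of size $\le\varepsilon_0$ there is a $T_0>0$ and a unique solution $(\phi,\psi,w)\in X_\varepsilon(0,T_0)$, with the lifespan depending only on the size of the data. This is routine for a symmetrizable hyperbolic system with the lower-order relaxation term $w$ on the left-hand side and smooth, bounded inhomogeneities $h_1$, $h_2$, $p_R'(V)h_1$ (whose decay and regularity are controlled by Lemma~\ref{Lem-H}). Next I would define
\begin{equation*}
	T^*:=\sup\Bigl\{\,T>0:\ \text{the solution exists on }[0,T]\text{ and }\sup_{0\le t\le T}\|(\phi,\psi,w)(t)\|_1\le\varepsilon\,\Bigr\},
\end{equation*}
which is positive by local existence. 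On $[0,T^*)$ Proposition~\ref{prop2.1} applies and yields
\begin{equation*}
	\|(\phi,\psi,w)(t)\|_1^2+\int_0^t\|(\phi_x,\psi_x,w_x)(\tau)\|^2\,d\tau\le C_0\bigl(\|(\phi_0,\psi_0,w_0)\|_1^2+\delta_0+\epsilon_0\bigr).
\end{equation*}
Then I would argue by contradiction: if $T^*<\infty$, the right-hand side above is $\le C_0(\varepsilon_0^2+\delta_0+\epsilon_0)<\tfrac12\varepsilon^2$ (say) uniformly in $t<T^*$, so by continuity $\|(\phi,\psi,w)(T^*)\|_1\le\tfrac1{\sqrt2}\varepsilon<\varepsilon$; applying local existence once more starting from time $T^*$ with data of size $<\varepsilon$ extends the solution past $T^*$ while keeping the $H^1$ bound below $\varepsilon$, contradicting maximality. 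Hence $T^*=\infty$, the solution is global in $X_\varepsilon(0,\infty)$, and passing to the limit $t\to\infty$ in the a priori estimate gives the stated uniform-in-time bound together with $\int_0^\infty\|(\phi_x,\psi_x,w_x)\|^2\,d\tau<\infty$. Uniqueness follows from the standard $L^2$ energy estimate for the difference of two solutions in $X_\varepsilon$.

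The only substantive content here is Proposition~\ref{prop2.1} itself, which is proved separately (Section~\ref{section4}); the present lemma is purely the bootstrap/continuation packaging, so the main thing to be careful about is the bookkeeping of constants — ensuring $C_0$ in Proposition~\ref{prop2.1} is genuinely independent of $T$ (as stated) so that the smallness conditions $\epsilon<\epsilon_0$, $\delta<\delta_0$, $\|(\phi_0,\psi_0,w_0)\|_1\le\varepsilon_0$ can be fixed once and for all, independently of the length of the time interval, and then checking that the chosen $\varepsilon$ in the definition of $X_\varepsilon$ satisfies $C_0(\varepsilon_0^2+\delta_0+\epsilon_0)<\varepsilon^2$. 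I expect no real obstacle beyond this; the decay of $h_1,h_2$ from Lemma~\ref{Lem-H} and the structure of \eqref{m3.5} make the local theory and the continuation entirely standard.
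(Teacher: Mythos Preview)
Your proposal is correct and matches the paper's approach exactly: the paper does not give a separate proof of this lemma at all, merely remarking that ``Once Proposition~\ref{prop2.1} is proved, we can extend the unique local solution $(\phi,\psi)$ to $T=+\infty$,'' and then stating the lemma. Your write-up is precisely the standard local-existence-plus-continuation argument the paper leaves implicit, with the correct bookkeeping on the smallness constants.
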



\section{A priori estimate}\label{section4}
Assume that the systems \eqref{m3.5}-\eqref{m4.6} has a solution $( \phi, \psi,w) \in X_\varepsilon(0,T), \varepsilon<<1$ for some $T>0$, that is,
\begin{equation}\label{xx3.1}
	\sup _ { t \in [ 0 , T ] } \| ( \phi, \psi,w) ( t ) \| _ { 1 } \leq \varepsilon,
\end{equation}
then it follows from the Sobolev inequality that there exists  constants  $c_2, d_2$ and $E_1$, satisfying $c_1<c_2<\bar{v}_{l},\bar{v}_{r}<d_2<d_1,$ and
\begin{equation}\label{m3.1}
	|P'_{R}(v) |<E_1<E.
\end{equation}
\begin{lma}\label{lma5.1}
	Under the assumptions of proposition \ref{prop2.1}, it holds that
	\begin{eqnarray}\label{t1}
		\left\|(\phi,\psi,\psi_t,\psi_x )(t)\right\|^{2} +\int_{0}^{t}\left\|  (\psi_x,\psi_t,\sqrt{V_t}\phi )       \right\|^{2}(\tau) d \tau  \leq C(    \|(\phi_{0}, \psi_{0},w_{0} ) \|_{1}^{2} +\delta_{0}^{2}+\epsilon)
	\end{eqnarray}
\end{lma}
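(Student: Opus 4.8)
The plan is to carry out a standard weighted energy estimate on the perturbation system \eqref{m3.5}, combining a zeroth-order (in $\phi,\psi,w$) estimate with a first-order estimate for $\psi_t$ and $\psi_x$, and absorbing all error terms coming from the ansatz (the $h_1,h_2$ terms) via Lemma \ref{Lem-H}, while the terms carrying the rarefaction-wave derivatives $V_x^r, U_x^r$ are controlled by the smallness of $\delta$ using Lemma \ref{xingzhi}. First I would derive the basic energy identity: multiply \eqref{m3.5}$_1$ by a suitable multiple of $A(V,\phi)/\phi$-type quantity (more precisely by $-(p_R(V+\phi)-p_R(V))$, i.e. the ``potential'' associated with the pressure), multiply \eqref{m3.5}$_2$ by $\psi$, and \eqref{m3.5}$_3$ by $w/E$ (or the appropriate constant so that the $\psi_x w$ cross terms cancel), then add and integrate over $\mathbb{R}$. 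The key structural gain is the good term $+\,c\,w^2$ coming from the relaxation term $w$ in \eqref{m3.5}$_3$, and — crucially — the term $\int \sqrt{V_t}\,\phi$ squared, which appears because $\del_t$ of the pressure potential produces a factor $V_t = U^r_x + (\text{periodic, exponentially small})>0$ by \eqref{2.13}; this is exactly the mechanism that yields the $\|\sqrt{V_t}\,\phi\|^2$ dissipation on the left-hand side. The $w^2$ dissipation together with the equation \eqref{m3.5}$_2$ then lets me recover $\|\psi_x\|^2$ (since $\psi_x = -w_x - h_2$ modulo lower order, so I instead use \eqref{m3.5}$_3$ differentiated, or simply note $Ew = $ algebraic combination giving control of $w_x$, hence $\psi_x$).

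Next I would do the first-order estimate. Since the system is essentially a damped wave equation for $\psi$ (differentiate \eqref{m3.5}$_2$ in $t$ and use \eqref{m3.5}$_3$ to replace $w_t$, getting $\psi_{tt} + \psi_t + E\psi_{xx} - (\text{lower order}) = (\text{error})$, schematically), multiply the resulting equation by $\psi_t$ and integrate; this produces $\frac{d}{dt}\int(\psi_t^2 + E\psi_x^2) + \int \psi_t^2$ plus commutator terms involving $A_x$, $B$, $V_x$, $U_x$ and derivatives of $h_1,h_2$. The terms with $V^r_x,U^r_x$ are handled by Young's inequality: a factor like $\int |V^r_x|\,\psi_t\,\phi \le C\delta \int (\psi_t^2 + \phi^2\,?)$ — here one must be careful and instead bound $\int |V^r_x|\phi^2$ by interpolating against $\int V_t\phi^2$ (good term) since $|V^r_x|\sim V_t$ up to the exponentially small periodic part, or bound it by $\|\phi\|_{L^\infty}\|\phi\|\|V^r_x\|_{L^1}\lesssim \varepsilon\cdot\|\phi\|_1^2\cdot$const using \eqref{2.14}. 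All error terms from $h_1,h_2,h_{2t}$ and $p'_R(V)h_1$ are integrable in time with exponential decay by Lemma \ref{Lem-H}, contributing $\le C\epsilon$ after integration. Finally I add a small multiple of the first-order estimate to the zeroth-order one so that the uncontrolled cross terms are absorbed, apply Gronwall if needed (the coefficients are only linearly growing at worst and the dangerous terms carry a factor $\delta$ or $\varepsilon$), and choose $\delta_0,\varepsilon_0$ small to close.

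The main obstacle I expect is the treatment of the terms in which the rarefaction-wave derivatives $V^r_x$ (equivalently $V_t$) multiply quadratic perturbation quantities — in particular reconciling the ``good'' sign term $\|\sqrt{V_t}\phi\|^2$ on the left against ``bad'' terms of the form $\int |V^r_x|\,|\psi||\phi|$, $\int|V^r_x|\,|\psi_t||\phi|$ and $\int |U^r_x|\,w\,\phi$ arising from $B(V,U_x)$ and from differentiating the nonlinearity $A(V,\phi)$. These cannot be absorbed by the small constant $\delta$ alone at the level of $\phi$ (there is no $\|\phi_x\|^2$ or $\|\phi\|^2$ dissipation), so one must spend the $\|\sqrt{V_t}\phi\|^2$ budget carefully, use that $U^r_x \le C\, V_t$ pointwise (from \eqref{2.15} and \eqref{2.13}), and use the $L^\infty$/$L^1$ bounds \eqref{2.14} on $(V^r_x,U^r_x)$ together with $\|\phi\|_{L^\infty}\le C\varepsilon$ to gain the extra smallness. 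A secondary technical point is that, because $\phi$ itself has no dissipation, recovering $\int_0^t\|\psi_x\|^2$ (as claimed in \eqref{t1}) must go through the $w$-equation and the relaxation damping rather than through $\phi$; I would obtain $w$ and $w_x$ control first and then read off $\psi_x$ from \eqref{m3.5}$_3$ up to the exponentially small $h_2$.
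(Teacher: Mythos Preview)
Your overall framework is right (energy method, exploit the relaxation structure, use Lemma \ref{Lem-H} for the $h_i$-terms, use Lemma \ref{xingzhi} for the rarefaction decay, and extract the good term $\int V^r_t\phi^2$ from the pressure potential), but your specific multiplier scheme does not close, and the workaround you propose at the end is circular.

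The concrete gap is the $\psi_x^2$ dissipation. In your zeroth-order step with multipliers $(A,\psi,w/E)$ on \eqref{m3.5}, the cross terms $-\int A\psi_x$ and $\int wA/E$ survive with no smallness factor; they are $O(\phi\psi_x)$ and $O(\phi w)$, and you have neither $\int\phi^2$ nor $\int\psi_x^2$ available to absorb them. In your first-order step (wave equation times $\psi_t$) you gain $\int\psi_t^2$ but again no $\int\psi_x^2$. Your proposed fix, ``obtain $w$ and $w_x$ control first and then read off $\psi_x$ from \eqref{m3.5}$_3$,'' requires $\int_0^t\|w_t\|^2$, which is Lemma \ref{lma5.2}; but the proof of Lemma \ref{lma5.2} uses $\int_0^t\|\psi_x\|^2$ (through the term $\tilde M$), so the argument loops.

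The paper's device, which you are missing, is to eliminate $w$ entirely by combining \eqref{m3.5}$_2$ and \eqref{m3.5}$_3$ into the damped wave equation \eqref{3.7} for $\psi$, and then multiply \eqref{3.7} not by $\psi_t$ alone but by $\mu\psi_t+\psi$ with the specific constant $\mu=\frac{E+E_1}{2E_1}>1$, while simultaneously multiplying \eqref{m3.5}$_1$ by $A(V,\phi)$. The $\psi$-part of the multiplier produces $E\psi_x^2$ in the dissipation; the $\mu\psi_t$-part, after integrating $-\mu\psi_tA_x$ by parts and using $\phi_t=\psi_x-h_1$, contributes $\mu p'_R(V{+}\phi)\psi_x^2$. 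The sum $(E+\mu p'_R(V{+}\phi))\psi_x^2$ is strictly positive precisely because $\mu E_1=\frac{E+E_1}{2}<E$. The same choice of $\mu$ makes the energy quadratic form $I_2=\mu E\psi_x^2+2\mu A\psi_x+2\Phi(V,\phi)$ (with $\Phi$ the pressure potential) positive definite in $(\psi_x,\phi)$. Thus $\psi_x^2$, $\psi_t^2$, and $V^r_t\phi^2$ all appear as good dissipation terms in a single identity, with no $w$ involved; $w$ is handled afterward in Lemma \ref{lma5.2}. Once you adopt this multiplier, the rest of your plan (handling $I_6$--$I_{13}$ by Lemmas \ref{xingzhi}, \ref{Lem-periodic}, \ref{Lem-H} and the smallness of $\delta,\varepsilon,\epsilon$) goes through essentially as you describe.
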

\begin{proof}
	At the beginning of this proof, we give two useful equalities which  can be deduced by     \eqref{g2.11} and  \eqref{x2.6} directly.
	\begin{equation}\label{s1}
		V-V^r=(v_l-\bar{v}_l)g_1+ (v_r-\bar{v}_r)(1-g_1)
	\end{equation}
	and
	\begin{equation}\label{s2}
		U^r- {U}=\left(u_{l}-\bar{u}_{l}\right)g_{2}+\left(u_{r}-\bar{u}_{r}\right) \left(1-g_{2}\right),
	\end{equation}
	With the aid of  $(\ref{m3.5})_{2}$ and $(\ref{m3.5})_{3}$, one gets that
	\begin{align}
		\begin{split}
			\psi_{t t}-E \psi_{x x}+\psi_{t}-A(V, \phi)_{x}-B\left(V, U_{x}\right)_{x}=-h_{2t}-h_2+(p'_R(V)h_1)_x.
		\end{split}&
		\label{3.7}\end{align}
We multiply  $(\ref{m3.5})_{1}$ and $(\ref{3.7})$ by $A(V, \phi)$ and $  \mu \psi_{t}+\psi  $, where $\mu=\frac{E_{1}+E}{2 E_{1}}$ , and $ E_{1}=\sup _{v \in[c, d]}\left|p_{R}^{\prime}(v)\right|$, respectively, sum them up, and  intergrading result with  respect to $t$ and $x$ over $ [0,t]\times {\R}$, we have
\begin{align*}
\begin{split}
	&  \frac{1}{2} \int_{\R}  \left( \underbrace{\psi^{2}+ \mu   \psi_{t}^{2}+2 \psi \psi_{t}}_{I_1}   +   \underbrace{\mu E \psi_{x}^{2}+2\mu A \psi_{x}+2\phi(V, \phi) }_{I_2}  \right)  dx \\
	& +\int_{0}^{t}  \int_{\R}  \underbrace{\left(E+\mu\left(p_{R}^{\prime}(V+\phi) \right)\right) \psi_{x}^{2}}_{I_3}  dxd \tau \\
	& +\int_{0}^{t}  \int_{\R} \underbrace{(\mu-1) \psi_{t}^{2}}_{I_4}   dxd \tau  +\int_{0}^{t}  \int_{\R} \underbrace{ V^{r}_{t}\left(P_{R}(V+\phi)-P_{R}(V)-P_{R}^{\prime}(V) \phi\right) }_{I_5}  dxd \tau \\
	=&  \left.    \frac{1}{2} \int_{\R}  \left( \psi^{2}+ \mu   \psi_{t}^{2}+2 \psi \psi_{t}   +    \mu E \psi_{x}^{2}+2\mu A \psi_{x}+2\phi(V, \phi)    \right)  dx\right|_{t=0} \\
	& +\int_{0}^{t}  \int_{\R}  \underbrace{ -\mu  \left[P_{R}^{\prime}(V+\phi)-P_{R}^{\prime}(V)\right] V^{r}_{t} \psi_{x} }_{I_6}   dxd \tau \\
	&+ \int_{0}^{t}  \int_{\R}  \underbrace{   \psi \tilde{B}_{x}}_{I_7}  dxd \tau+\int_{0}^{t}  \int_{\R} \underbrace{\mu \psi_{t} \tilde{B}_{x} }_{I_8}     dxd \tau\\
\end{split}&
\end{align*}
\begin{align*}
\begin{split}&+ \int_{0}^{t}  \int_{\R}  \underbrace{   \psi(B- \tilde{B})_{x}}_{I_9}  dxd \tau+\int_{0}^{t}  \int_{\R} \underbrace{\mu \psi_{t} (B-\tilde{B})_{x} }_{I_{10}}     dxd \tau\\
	&+\int_{0}^{t}  \int_{\R} \underbrace{( V^R-V)_{t}\left(P_{R}(V+\phi)-P_{R}(V)-P_{R}^{\prime}(V) \phi\right) }_{I_{11}}  dxd \tau \\
	& +\int_{0}^{t}  \int_{\R}  \underbrace{ -\mu  \left[P_{R}^{\prime}(V+\phi)-P_{R}^{\prime}(V)\right] (V-V^R)_{t} \psi_{x} }_{I_{12}}   dxd \tau \\
	&+ \int_{0}^{t}  \int_{\R}    \underbrace{(\mu \psi_{t} + \psi)   [   (p'_R(V)h_1)_x-h_{2t}-h_2]-h_1 A}_{I_{13}}    dxd \tau\\
\end{split}&
\end{align*}
where $\tilde{B}=\left[E+p_{R}^{\prime}(V^{r})\right] U^{r}_{x}$,  $\phi(V,\phi):=P_R(V)\phi-\int_{V}^{V+\phi}P_R(s)ds$. First we estimate the left hand side of the above equality. Due to  $\left(\ref{2.4}\right)_{2}  $ and  $\eqref{m3.1}$, there exist positive constants $ c_{i}(i=3, \cdots, 7)$  such that
\begin{equation}
\begin{aligned}
	c_{3}\left(\psi^{2}+\psi_{t}^{2}\right) & \leq I_{1} \leq c_{4}\left(\psi^{2}+\psi_{t}^{2}\right), \quad c_{5}\left(\phi^{2}+\psi_{x}^{2}\right) \leq I_{2}  \leq c_{6}\left(\phi^{2}+\psi_{x}^{2}\right), \\
	I_{3} +I_{4}  & \geq c_{7}\left(\psi_{x}^{2}+\psi_{t}^{2}\right), \quad I_{5}  \geq a_{2} V^{r}_{t} \phi^{2}.
\end{aligned}
\end{equation}
Thus we obtain
\begin{align}\label{m4.26}
\begin{split}
	&   \int_{\R}  \left(  \psi^{2}+     \psi_{t}^{2}    +     \psi_{x}^{2}  +\phi  ^{2}  \right)  dx  +\int_{0}^{t}  \int_{\R}    \psi_{x}^{2} +\psi_{t}^{2}+V^{r}_{t}\phi^{2}  dxd \tau \\
	\leq C &       \int_{\R}  \left(  \psi_{0}^{2}+     \psi_{0t}^{2}    +  \psi_{0x}^{2}  +\phi_{0}  ^{2}  \right)  dx  + C \int_{0}^{t}  \int_{\R} \sum_{6}^{13} I_i    dxd \tau\\
\end{split}&
\end{align}
Next, we estimate the right hand side of (\ref{m4.26})
\begin{align}\label{4.27}
\begin{split}
	\int_{0}^{t}  \int_{\R}  I_6    dxd \tau  & \leq \frac{1}{4}  \int_{0}^{t}  \int_{\R} V^{r}_{t} \phi^{2}dxd \tau +C \int_{0}^{t}  \int_{\R}   V^{r}_{t}    \psi_{x}^{2}   dxd \tau \\
	&\leq \frac{1}{4}  \int_{0}^{t}  \int_{\R} V^{r}_{t} \phi^{2}  dxd \tau  +C \delta_{0} \int_{0}^{t}  \int_{\R} \psi_{x}^{2} dxd \tau ,
\end{split}&
\end{align}
where we have used \eqref{2.14}. Using Holder inequality, one gets
\begin{align}\label{4.28}
\begin{split}
	\int_{0}^{t}  \int_{\R} I_7    dxd \tau \leq & C \int_{0}^{t}  \int_{\R} (|V^{r}_{x}U^{r}_{x}|+|U^{r}_{xx}|) |\psi |   dxd \tau \\
	\leq & C \int_{0}^{t}  (\|V^{r}_{x}U^{r}_{x}\|_{L^1}+\|U^{r}_{xx}\|_{L^1}) \|\psi \|^{\frac{1}{2}}\|\psi _{x}\|^{\frac{1}{2}}  d \tau  \\
	\leq & C \int_{0}^{t}     (\|V^{r}_{x}U^{r}_{x}\|_{L^1}+\|U^{r}_{xx}\|_{L^1})^{\frac{4}{3}}+\|\psi\|^{2}\left\|\psi_{x}\right\|^{2}  d \tau\\
	\leq & C     \left(\delta_{0}^{\frac{4}{3}}+\varepsilon_{0}^{2} \int_{0}^{t}\left\|\psi_{x}\right\|^{2} d \tau\right),
\end{split}&
\end{align}
where we have used the Young's Inequality. Cauchy inequality gives that
\begin{align}\label{4.29}
\begin{split}
	\int_{0}^{t}  \int_{\R}  I_{8}   dxd \tau \leq & \frac{1}{4} \int_{0}^{t}  \int_{\R}        \psi_{t}^{2}                dxd \tau  +C   \int_{0}^{t}  \int_{\R}        B_{x}^{2}                dxd \tau\\
	\leq &\frac{1}{4}  \int_{0}^{t}  \int_{\R}        \psi_{t}^{2}                dxd \tau  +C   \int_{0}^{t}  \int_{\R}        | U^{r}_{xx}|^{2} +      | V^{r}_{x}U^{r}_{x}|^{2}               dxd \tau\\
	\leq &\frac{1}{4}  \int_{0}^{t}  \int_{\R}        \psi_{t}^{2}                dxd \tau  +C  \delta_0^{2}.
\end{split}&
\end{align}
Now, we estimate the error terms $I_9-I_{13}$ term by term. By directly calculation, we have
\begin{align}\label{ppp4.29}
\begin{aligned}
	& B_{x}-\tilde{B}_{x} \\
	= & {\left[E+P_{R}^{\prime}(V)\right]\left(U_{x x}-{{U}^{r}_{x x}}\right)+\left[P_{R}^{\prime}(V)-P_{R}^{\prime}(\tilde{V})\right] {{U}^{r}_{x x}} } \\
	& +P_{R}^{\prime \prime}(V) V_{x}\left(U_{x}-{{U}^{r}_{x}}\right)+P_{R}^{\prime \prime}(V)\left(V_x-{{V}^{r}_{x}} \right){{U}^{r}_{x}} \\
	& +\left[P_{R}^{\prime \prime}(V)-P_{R}^{\prime \prime}(\tilde{V})\right] {{V}^{r}_{x}} {{U}^{r}_{x}} \\
	\leqslant & C\left\{\left|U_{x}-{{U}^{r}_{x}}\right|+\left|U_{x x}-{{U}^{r}_{x x}}\right|+|V-\tilde{V}|+\left|V_{x}-{{V}^{r}_{x}}\right|\right\} \\
	\leqslant & C \sum_{k=0}^{2} \sum_{j=r, l}\left|\frac{\partial}{\partial x^{k}}\left(u_{j}-\bar{u}_{j}\right)\right|+\sum_{k=0}^{2} \sum_{j=r, l}\left|\frac{\partial}{\partial x^{k}}\left(v_{j}-\bar{v}_{j}\right)\right| .
\end{aligned}
\end{align}
Where the last inequality we have used \eqref{s1}  and \eqref{s2}. With the aid of \eqref{ppp4.29}, it follows that
\begin{align}\label{ppp92}
\begin{aligned}
	\int_{0}^{t} \int_{\R} I_{9} d x d t & \leq   \int_{0}^{t}     \|\psi\|  \|(B-\tilde{B}) _x\|  d t \\
	& \leq    \sup_{0\leq \tau\leq t}\|\psi\|(\tau) \int_{0}^{t}       \|(B-\tilde{B}) _x\|     d t   \\
	& \leq   C \varepsilon   \int_{0}^{t}      \epsilon e^{-\alpha t}    d t \leq C \epsilon.
\end{aligned}
\end{align}
We have used Lemma \ref{Lem-periodic}  in the penultimate inequality. Similar like \eqref{4.29}, we have
\begin{align}\label{ppp2}
\begin{aligned}
	\int_{0}^{t} \int_{\R} I_{10} d x d t & \leqslant \frac{1}{4} \int_{0}^{t}\left\|\psi_{t}\right\|^{2} d t+C \int_{0}^{t}\|(B-\tilde{B})_ x\|^{2} d t \\
	& \leqslant \frac{1}{4} \int_{0}^{t}\left\|\psi_{t}\right\|^{2} d t+C \cdot \epsilon^{2}.
\end{aligned}
\end{align}
By directly calculation, one gets that

\begin{align}\label{ppp290}
\begin{aligned}
	\int_{0}^{t} \int_{\R} I_{11} +I_{12} d x d t & \leqslant C \int_{0}^{t}  \int_{\R}    |\left(V-V^{r}\right)_{t} |  ( |\phi^{2}|+|\phi\psi_{x}|) d x      d t \leq  C  \epsilon,
\end{aligned}
\end{align}

where we have used
\begin{align}\label{j1}
\begin{aligned}
	\left(V-V^{r}\right)_{t} & =\left(v_{l}-\bar{v}_{l}\right) g_{1 t}+\left(v_{r}-\bar{v}_{r}\right)\left(1-g_{1}\right)_{t} +v_{l t} g_{1}+v_{r t}\left(1-g_{1}\right) \\
	& \leqslant C\left(\left|v_{l}-\bar{v}_{l}\right|+\left|v_{r}-\bar{v}_{r}\right|+\left|u_{l x}\right|+\left|u_{r x}\right|\right).
\end{aligned}
\end{align}

With the aid of Lemma \ref{Lem-H},  using Holder inequality, it follows that

\begin{align}\label{x4.29}
\begin{split}
	\int_{0}^{t} \int_{\R} I_{13} d x d t & \leqslant    C \int_{0}^{t}     \int_{\R}   (|\phi|+|\psi_{t}|+|\psi|) (|h_1|+ |h_{1x}|+ |h_2|+ |h_{2t}|)   d x d t \\
	& \leqslant   \int_{0}^{t}       (\|\phi\|+\|\psi_{t}\|+\|\psi\|) (\|h_1\|+ \|h_{1x}\|+ \|h_2\|+ \|h_{2t}\|)                                          d t \leqslant   \epsilon .
\end{split}&
\end{align}






Using (\ref{m4.26})-(\ref{4.29}), \eqref{ppp92}-\eqref{ppp290} and (\ref{x4.29}), taking $\varepsilon_{0}$ sufficiently small, we can obtain (\ref{t1}) immediately. Lemma \ref{lma5.1} is completed.
\end{proof}

\begin{lma}\label{lma5.2}
Under the assumptions of proposition \ref{prop2.1}, it holds that
\begin{eqnarray}\label{4.41}
\left\|(w,w_x,w_{t})(t)\right\|^{2}(t)+\int_{0}^{t}\left\|(w,w_{t})\right\|^{2}(\tau) d \tau \leq  C \left( \|(\phi, \psi, w)(0)\|_{1}^{2}+\delta_{0}+\epsilon\right)
\end{eqnarray}
\end{lma}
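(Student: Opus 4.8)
\textbf{Proof proposal for Lemma \ref{lma5.2}.}

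The plan is to derive energy estimates for $w$ directly from the third equation of the reformulated system \eqref{m3.5}, namely
\begin{equation*}
w_{t}+E\psi_{x}+w+A(V,\phi)+B(V,U_x)=-p'_{R}(V)h_1,
\end{equation*}
treating it as a transport-damped equation for $w$ with right-hand side controlled by quantities already estimated in Lemma \ref{lma5.1} (the full $\psi$-energy, including $\|\psi_x\|$ in $L^2_tL^2_x$) together with the wave-strength bounds from Lemma \ref{xingzhi} and the exponential decay of $h_1$ from Lemma \ref{Lem-H}. First I would multiply this equation by $w$ and integrate over $[0,t]\times\R$: the term $w\cdot w_t$ gives $\tfrac12\frac{d}{dt}\|w\|^2$, the damping term $w\cdot w$ gives $\|w\|^2$ on the good side, while $Ew\psi_x$, $wA(V,\phi)$, $wB(V,U_x)$ and $wp'_R(V)h_1$ are moved to the right. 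Using $|A(V,\phi)|\le C|\phi|$ (since $p_R$ is $C^1$ with bounded derivatives by \eqref{2.4}) and $|B(V,U_x)|=|(E+p'_R(V))U_x|\le C|U^r_x|+C(\text{periodic errors})$ via \eqref{s1}, Cauchy--Schwarz and Young absorb $\|w\|^2$ partially into the left, leaving $\int_0^t(\|\phi\|^2+\|\psi_x\|^2+\|U^r_x\|^2+\ldots)\,d\tau$, which is bounded by $C(\|(\phi_0,\psi_0,w_0)\|_1^2+\delta_0+\epsilon)$ thanks to Lemma \ref{lma5.1}, \eqref{2.14} and Lemma \ref{Lem-H}. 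This yields control of $\|w(t)\|^2+\int_0^t\|w\|^2\,d\tau$.

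Next I would obtain the $w_t$ and $w_x$ estimates. For $\|w_t\|$, note that $Ew_x$ can be recovered from the equation itself: $w_x = -\tfrac1E(w_t+w+A(V,\phi)+B(V,U_x)+p'_R(V)h_1)$, so once $\|w_t\|$ is controlled, $\|w_x\|$ follows pointwise in $t$ from the already-established bounds on $\|w\|,\|\phi\|$ and the wave/periodic terms. To get $\|w_t\|$ I would differentiate the third equation of \eqref{m3.5} in $t$, obtaining an equation of the form $w_{tt}+E\psi_{xt}+w_t+\partial_t A+\partial_t B=\text{(terms in }h_1,h_{1t})$; multiplying by $w_t$ and integrating gives $\tfrac12\frac{d}{dt}\|w_t\|^2+\|w_t\|^2$ on the left. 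The term $E\psi_{xt}w_t$ is the delicate one — I would integrate by parts in $x$ to move the derivative, or alternatively use $\psi_{tt}$ from \eqref{3.7} / the relation $\psi_t = w_x$-type identities; more directly, from $(\ref{m3.5})_2$ we have $\psi_{xt} = -w_{xx}-h_{2x}$, which reintroduces a second derivative, so instead I would use $(\ref{m3.5})_2$ in the form $\psi_t = -w_x - h_2$ and hence $\psi_{xt}=-w_{xx}-h_{2x}$; to avoid $w_{xx}$ entirely, the cleaner route is to multiply the $t$-differentiated equation by $w_t$ and handle $E\psi_{xt}w_t = E\psi_{xt}w_t$ by writing $\psi_{xt}w_t = \partial_x(\psi_t w_t) - \psi_t w_{xt}$ and then using $w_{xt}$ expressed through the original equation differentiated in $x$. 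The terms $\partial_t A(V,\phi) = -p'_R(\cdot)(\phi_t + V_t) + p'_R(V)V_t$ type expressions are bounded by $C(|\phi_t|+|\phi||V_t| + |V_t|)$ where $\phi_t = \psi_x - h_1$ from $(\ref{m3.5})_1$, all of which are in $L^2_tL^2_x$ by Lemma \ref{lma5.1}, Lemma \ref{xingzhi} \eqref{2.14}–\eqref{2.15}, and Lemma \ref{Lem-H}; similarly $\partial_t B$ involves $U_{xt}$, $V_t U_x$, controlled by \eqref{2.14}, \eqref{cccc2.15}, \eqref{2.15} and the periodic decay.

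The main obstacle I anticipate is controlling the coupling term $E\psi_{xt}w_t$ (equivalently, closing the $w_t$ estimate without generating an uncontrolled $\|w_{xx}\|$ or $\|\psi_{tt}\|$ term): one must either integrate by parts carefully so that the boundary/cross terms recombine into time derivatives of already-controlled quantities, or exploit the structure $\psi_t+w_x=-h_2$ to trade $\psi_{xt}$ for $w_{xx}$ and then absorb $w_{xx}$ only after it appears with a favorable sign — but since Lemma \ref{lma5.2} does not claim an $L^2_tL^2_x$ bound on $w_{xx}$, the correct strategy is the former. Once that cross term is tamed, Gronwall's inequality (or direct integration, using the exponential-in-time decay of all $h$-terms and the $\delta_0$-smallness of the wave derivatives) closes the estimate and gives \eqref{4.41}. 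All remaining manipulations — Cauchy--Schwarz, Young with small parameter to absorb $\|w\|^2,\|w_t\|^2$ into the left side, and invoking Lemma \ref{lma5.1} for the $\psi$-terms — are routine.
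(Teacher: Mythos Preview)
Your plan contains two concrete errors that would prevent it from closing, and both arise from misreading the structure of \eqref{m3.5}.

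First, your algebraic recovery of $w_x$ is wrong: the third equation of \eqref{m3.5} contains $E\psi_x$, not $Ew_x$, so solving it yields $\psi_x=-\tfrac1E(w_t+w+A+B+p'_R(V)h_1)$, not $w_x$. The paper instead reads $w_x$ off the \emph{second} equation, $\psi_t+w_x=-h_2$, i.e.\ $w_x=-\psi_t-h_2$; since Lemma~\ref{lma5.1} already controls $\|\psi_t(t)\|$ and $\int_0^t\|\psi_t\|^2\,d\tau$, and Lemma~\ref{Lem-H} handles $h_2$, the $w_x$ estimate is a one-line consequence. Second, for the $w_t$ estimate you do consider the substitution $\psi_{xt}=-w_{xx}-h_{2x}$ but then discard it, worried that $w_{xx}$ appears. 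This is exactly the route the paper takes, and it works because $-Ew_{xx}w_t$ is not a bad term: integrating by parts in $x$ gives $-Ew_{xx}w_t=-E(w_xw_t)_x+Ew_xw_{xt}=-E(w_xw_t)_x+\tfrac{E}{2}(w_x^2)_t$, so the spatial divergence vanishes on integration and $\tfrac{E}{2}(w_x^2)_t$ simply joins the energy on the left (this is the identity \eqref{4.44}). No control of $w_{xx}$ is ever needed, and your alternative maneuvers (moving $\partial_x$ onto $\psi_t$, then expressing $w_{xt}$ through the $x$-differentiated equation) are unnecessary detours.

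A smaller issue: in your ``multiply by $w$'' step you claim $\int_0^t(\|\phi\|^2+\|U^r_x\|^2)\,d\tau$ is bounded via Lemma~\ref{lma5.1} and \eqref{2.14}, but Lemma~\ref{lma5.1} only gives the \emph{weighted} integral $\int_0^t\|\sqrt{V^r_t}\,\phi\|^2\,d\tau$, and \eqref{2.14} yields $\|U^r_x\|^2\le c\delta(1+t)^{-1}$, whose time integral diverges. The paper avoids this by working only with the $t$-differentiated equation: the right-hand side then involves $A_t,B_t$, which after splitting off the rarefaction part $\tilde M,\tilde B_t$ are controlled by $\int_0^t(\|\psi_x\|^2+\|\sqrt{V^r_t}\,\phi\|^2)\,d\tau$, \eqref{cccc2.15}, and the exponential decay of the periodic remainders from Lemma~\ref{Lem-periodic}.
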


\begin{proof}
By (\ref{m3.5}), we see that $w_x=-\psi_t-h_2$, thus the estimate of $w_x$ in  (\ref{4.41})  comes from Lemma  \ref{lma5.1} and  Lemma \ref{Lem-H} directly.  Differentiating $(\ref{m3.5}) _3$ with respect to $t$, then multiplying the result by $w_{t}$, we obtain that

\begin{align}\label{4.44}
\begin{split}
&\frac{1}{2}\left( w_{t}^{2}+  E w_{x}^{2}\right)_{t}-E\left(w_{x} w_{t}\right)_{x}+w_{t}^{2}=-\left(A   +B+p'_{R}(V)h_1\right)_{t} w_{t}.
\end{split}&
\end{align}
Integrating (\ref{4.44}) over $[0,t] \times (- \infty ,\infty)$, one get
\begin{align}\label{4.45}
\begin{split}
&   \int_{\R}   \left(\frac{1}{2} w_{t}^{2}+\frac{1}{2} E w_{x}^{2}\right)_{} dx          +    \int_{0}^{t} \int_{\R} w_{t}^{2} dxd\tau \\
=& \int_{\R}   \left(\frac{1}{2} w_{t}^{2}+\frac{1}{2} E w_{x}^{2}\right)\Big|_{t=0}dx+ \int_{0}^{t} \int_{\R} \underbrace{-\tilde{M} w_{t}}_{I_{14}} dxd\tau       +\int_{0}^{t} \int_{\R} \underbrace{-\tilde{B}_{t} w_{t}}_{I_{15}} dxd\tau\\
&+\int_{0}^{t} \int_{\R} \underbrace{      [p'_{R}(V)h_1 ]_{t} w_{t}     }_{I_{16}} dxd\tau+ \int_{0}^{t} \int_{\R} \underbrace{(A_{t}-\tilde{M}) w_{t}}_{I_{17}} dxd\tau       +\int_{0}^{t} \int_{\R} \underbrace{(B-\tilde{B})_{t} w_{t}}_{I_{18}} dxd\tau,
\end{split}&
\end{align}
where $\tilde{M}=\left[P_{R}^{\prime}(V)-P_{R}^{\prime}(V+\phi)\right] V^{r}_{t}-P_{R}^{\prime}(V+\phi) \psi_{x}$. We estimate right hand side  below.
Using $\eqref{2.11}_{2} $ and Lemma {\ref{xingzhi}}, we get
\begin{align*}
\tilde{ B}_{t}\leq    C   ( |V^{r}_{xx}| +  (V^{r}_{x})^{2} ).
\end{align*}
Cauchy inequality gives that
\begin{align}
\begin{split}
\int_{0}^{t} \int_{\R} (I_{14}+I_{15}) d x d \tau \leq&\frac{1}{4}  \int_{0}^{t} \int_{\R} w_t^{2} dxd\tau   +C  \int_{0}^{t} \int_{\R}  \left(\tilde{M} ^{2} + {\tilde{B}_{t}} ^{2} \right) dx d\tau \\
\leq&\frac{1}{4}  \int_{0}^{t} \int_{\R} w_t^{2} dxd\tau   +C \left( \int_{0}^{t} \int_{\R}  \left(\psi_{x}^{2} +(\sqrt{V^{r}_{t}} \phi)^{2} \right) dx d\tau+\delta_{0}\right).
\end{split}&
\end{align}



Using the same method in (\ref{x4.29}), the error terms $I_{16}$ can be estimated as
\begin{align}
\begin{split}
\int_{0}^{t}  \int_{\R}   I_{16}    dxd \tau\leq C \epsilon.
\end{split}&
\end{align}
Using (\ref{j1}) and Lemma \ref{Lem-H}, one gets that
\begin{align}\label{b4029}
\begin{split}
\int_{0}^{t}  \int_{\R}   I_{17}    dxd \tau=&   \int_{0}^{t}  \int_{\R}  \left\{  \left(p'_{R}(V)-p'_{R}(V+\phi)\right)(V_t-V^r_t)+p'_{R}(V+\phi) h_1  \right\}  w_{t} dx d\tau\\
\leq&\frac{1}{4}  \int_{0}^{t} \int_{\R} w_t^{2} dxd\tau   +C  \int_{0}^{t} \int_{\R}  |\phi|^2   |  V_t-V^r_t|^2 + |h_1|^2dx d\tau\\
\leq&\frac{1}{4}  \int_{0}^{t} \int_{\R} w_t^{2} dxd\tau   +C \sup_t \|\phi\|^2 \int_{0}^{t} \int_{\R}    |  V_t-V^r_t|^2 dx d\tau +C  \int_{0}^{t}  \|h_1\|^2 d\tau\\
\leq&\frac{1}{4}  \int_{0}^{t} \int_{\R} w_t^{2} dxd\tau   +C \epsilon,
\end{split}&
\end{align}
where we have used \eqref{xx3.1}, \eqref{s1}, Lemma \ref{Lem-periodic} and Lemma \ref{Lem-H}. By directly  calculation, one gets that
\begin{align*}
\begin{aligned}
& B_{t}-\tilde{B}_{t} \\
\leqslant & C\left\{\left|U_{x}-{{U}^{r}_{x}}\right|+\left|U_{x t}-{{U}^{r}_{x t}}\right|+|V-\tilde{V}|+\left|V_{t}-{{V}^{r}_{t}}\right|\right\} \\
\leqslant & C \sum_{k=0}^{2} \sum_{j=r, l}\left|\frac{\partial}{\partial x^{k}}\left(u_{j}-\bar{u}_{j}\right)\right|+\sum_{k=0}^{2} \sum_{j=r, l}\left|\frac{\partial}{\partial x^{k}}\left(v_{j}-\bar{v}_{j}\right)\right| .
\end{aligned}
\end{align*}
Thus similar like  \eqref{b4029}, one  gets that
\begin{align}\label{jidan}
\begin{split}
\int_{0}^{t}  \int_{\R}   I_{18}    dxd \tau \leq\frac{1}{4}  \int_{0}^{t} \int_{\R} w_t^{2} dxd\tau   +C \epsilon,
\end{split}&
\end{align}
Using (\ref{4.45})-(\ref{jidan}), we can obtain (\ref{4.41}) immediately. Lemma \ref{lma5.2} is completed.
\end{proof}
Proposition \ref{prop2.1} is directly proved from Lemma \ref{lma5.1}-\ref{lma5.2}.
\section{The proof of theorem \ref{theorem201} }\label{section5}
Now, we turn to proof of the main theorem, i.e, Theorem \ref{theorem201}. With the help of Lemma \ref{prop201},  one can obtain (\ref{ads2.11}) immediately. It remain to show    (\ref{ads2.12}). We will use the following lemma.
\begin{lma}[\cite{mn1985}]\label{lemma5.1}
Suppose that the function \ $f(t) \geq 0\in L^1(0, +\infty) \cap  BV(0, +\infty) $. Then  it holds that   $t \rightarrow \infty$  as  $f(t) \rightarrow0$.
\end{lma}
\begin{proof}  [  proof of theorem \ref{theorem201}]
Differentiating  $(\ref{m3.5})_1$ with   respect to $x$, multiplying the resulting  by  $\phi_{x}$, and integrating on  $(-\infty,\infty)$, we have
\begin{equation*}
\left|\frac{{ d}}{{ d}t}\left(\|\phi_{x}\|{}^{2}\right)\right|\leq C(\|\phi_{x}\|{}^{2} +\|\psi_{xx}\|{}^{2}  +\|h_{1x}\|{}^{2}    )  .
\end{equation*}
Using  Lemma \ref{prop201} and Lemma {\ref{Lem-H}}, we have
\begin{equation*}
\int_{0}^{\infty} \left|\frac{{ d}}{{ d}t}\left(\|\phi_{x}\|{}^{2}\right)\right| { dt} \leq C_0 \left( \left\| \left( \phi_ { 0 } , \psi_ { 0 },w_{0} \right) \right\| _ { 1 } ^ { 2 }+\delta_{0}+\epsilon_{0}\right)\leq C.
\end{equation*}
Thus  one gets that  $\|\phi_{ x}\|{}^{2}\in L^1(0, +\infty) \cap  BV(0, +\infty)$. By Lemma  \ref{lemma5.1},
\begin{equation*}
\|\phi_{ x}\|{}\rightarrow0, \quad \text{as} \quad t\rightarrow+\infty.
\end{equation*}
Since  $\|\phi_{xx}\|{}$ is bounded, the  Sobolev inequality gives that
\begin{eqnarray*}
\|{}{v}-V\|_{L^{\infty}}^{2}=\|\phi_{x}\|_{L^{\infty}}^{2} \leq 2\| \phi_{x}(t)\|{}  \| \phi_{xx}(t)\|{} \rightarrow  0.
\end{eqnarray*}
With the aid of \eqref{s1}, \eqref{s2} and Lemma \ref{Lem-periodic}, one gets that
\begin{align*}
\begin{split}
\lim_{t\rightarrow \infty}\sup_{x\in \R}|{v}-V^{r}|\leq \lim_{t\rightarrow \infty}\sup_{x\in \R}(|{v}^{r}-V|+|V-V^{r}|)=0.
\end{split}&
\end{align*}
Thus $\lim_{t\rightarrow \infty}\sup_{x\in \R}|{v}-V^{r}|=0$,  similarly, we can prove that
\begin{align*}
\begin{split}
\lim_{t\rightarrow \infty}\sup_{x\in \R}|({u}-U^{r}, p-P_{R}(V^r))|=0.
\end{split}&
\end{align*}
Thus    theorem \ref{theorem201} is proved.
\end{proof}
\section{Appendix}
\subsection{Proof of Lemma \ref{Lem-H}}
\begin{proof}
1) First, we estimate $h_1$. With the help of \eqref{x2.11}, by directly calculation, one obtain that
\begin{align*}
\begin{split}
h_1^{2}  \leq   \sum_{j=r,l}&\left( \left|v_j-\bar{v}_j\right|^{2}+\left|u_j-\bar{u}_j \right|^{2}+ \left| u_{jx}\right|^{2} \right)  (\left|g_{1t}\right|^{2}+\left|g_{2t}\right|^{2}+\left|g_1-g_2\right|^{2}).
\end{split}&
\end{align*}
Using Holder inequality and Lemma \ref{Lem-periodic}, one gets that
\begin{align}\label{y2.12}
\begin{split}
\|h_1 \| \leq   & \|(v_r-\bar{v}_r)^{2},(\bar{v}_l-v_l)^{2},(u_r-\bar{u}_r)^{2},(\bar{u}_l-u_l)^{},u_{rx }^{2},u_{lx }^{2}\|_{L^1}^{\frac{1}{2}}   \|g_{1t}^{2},g_{2t}^{2},g_1^{2},g_2^{2}\|_{L^{\infty}}^{\frac{1}{2}}\\
=  & \|(v_r-\bar{v}_r),(\bar{v}_l-v_l),(u_r-\bar{u}_r),(\bar{u}_l-u_l),u_{rx },u_{lx }\|    \|g_{1t},g_{2t},g_1,g_2\|_{L^{\infty}}\\
\leq &C\epsilon e^{-\alpha t}   \|g_{1t},g_{2t}, g_1,g_2 \|_{L^{\infty}}\leq C \epsilon e^{-\alpha t}.
\end{split}&
\end{align}
The last inequality we have used \eqref{2.14} and  \eqref{2.15}.

2) This part we estimate the term  $h_{1x}$. Using
\begin{align*}
\begin{split}
h_{1x}=& (v_r -v_l)_{x}g_{1t}-(u_r -u_l)_{x}g_{2t}+(u_r-u_l)_{x}(g_1-g_2)_{x}\\
&+(u_r-u_l)_{xx}(g_1-g_2)+(v_r-\bar{v}_r+\bar{v}_l-v_l) g_{1tx}-(u_r-\bar{u}_r+\bar{u}_l-u_l) g_{2tx},\\
\end{split}&
\end{align*}
we obtain
\begin{align*}
\begin{split}
h_{1x}^{2}\leq & \sum_{j=r,l}\left( \left|v_j-\bar{v}_j\right|^{2}+\left|u_j-\bar{u}_j \right|^{2}  + \left| u_{jx}\right|^{2} + \left| u_{jxx}\right|^{2}\right) \sum_{i=1,2}(\left|g_{it}\right|^{2} +\left|g_i\right| ^{2}  +\left|g_{itx}\right|^{2} +\left|g_{ix}\right| ^{2}     )\\
\leq &C \sum_{j=r,l}\left( \left|v_j-\bar{v}_j\right|^{2}+\left|u_j-\bar{u}_j \right|^{2}  + \left| u_{jx}\right|^{2} + \left| u_{jxx}\right|^{2}\right) \sum_{i=1,2}(   \left|g_i\right|^{2}   +\left|g_{ixx}\right|^{2} +\left|g_{ix}\right|^{2}  +  {g_{1x} ^{4}}   ).\\
\end{split}&
\end{align*}
The last inequality we have used \eqref{m2.11} and \eqref{g2.11}. Similar like \eqref{y2.12}, we obtain that
\begin{equation*}
\|h_{1x}\|   \leq C \epsilon e^{-\alpha t}.
\end{equation*}

3) Using$(\ref{2.4})_4$ and \eqref{x2.6},   the term  $h_2$  can be estimated as
\begin{align*}
\begin{split}
h_2^{2}\leq & \sum_{j=r,l}\left( \left|v_j-\bar{v}_j\right|^{2}+\left|V-V^r \right|^{2}  + \left| v_{jx}\right|^{2}  \right)  (\left|g_{2x}\right|^{2} +\left| a_{2 }(V-v_l)(1-g_1)\right| ^{2}  +\left| a_{2 }(V-v_r) g_{1}\right|^{2} +\left| V_{x}\right|  ^{2}    )\\
\leq & \sum_{j=r,l}\left( \left|v_j-\bar{v}_j\right|^{2}  + \left| v_{jx}\right|^{2}  \right)  (\left|g_{2x}\right|^{2} +\left| a_{2 }(V-v_l)(1-g_1)\right| ^{2}  +\left| a_{2 }(V-v_r) g_{1}\right|^{2} +\left| V_{x}\right|  ^{2}    )\\
\leq &C \sum_{j=r,l}\left( \left|v_j-\bar{v}_j\right|^{2}  + \left| v_{jx}\right|^{2}  \right)  (\left|g_{2x}\right|^{2} +\left|  (1-g_1)\right| ^{2}  +\left|  g_{1}\right|^{2} +\left| V_{x}\right|  ^{2}    ).\\
\end{split}&
\end{align*}
where we have used \eqref{s1} in the second inequality and $(\ref{2.4})_2$, (\ref{2.14}), (\ref{decay-per}), (\ref{x2.6})in the last inequality.

4) Since $h_{2t}$ is very complex,  we estimate this term  carefully.
\begin{align}\label{hbu2.11}
\begin{split}
h_{2t}= &[p'_R(V)-p'_R(v_l)]_{t} v_{lx} (1-g_1)-[p'_R(V)-p'_R(v_l)] v_{lx}  g_{1t} \\
&+[p'_R(V)-p'_R(v_l)] v_{lxt} (1-g_1)+[p'_R(V)-p'_R(v_r)] u_{rxx} g_1 \\
&+[p'_R(V)-p'_R(v_r)] v_{rx} g_{1t} + [p'_R(V)-p'_R(v_r)]_{t} v_{rx} g_1\\
&+(v_r-\bar{v}_r+\bar{v}_l-v_l)g_{2xt} + [p'_R(V)-p'_R(V^r)] V_{xt}\\
&+\underbrace{[p'_R(V)-p'_R(V^r)]_{t} V_{x}}_{W_{1}}+\underbrace{(v_r -v_l)_{t}g_{2x}}_{W_{2}}.
\end{split}&
\end{align}
Since every term except $W_1$, $W_2$ in \eqref{hbu2.11} can be controlled by  $\frac{\partial^{k}}{\partial x^{k}} \{(v_{j} - \bar{v}_{j}, u_{j}-\bar{u}_{j})\}, k=0,1,2; j=r,l$, we concentrate the main effort on $W_1$ and $W_2$. Using
$$  u_{j t}=-p_{R}\left(v_{j}\right)_{x} =-p_{R}^{\prime}\left(v_{j}\right)v_{jx}  \quad j=\gamma, l, $$
one gets that
\begin{equation*}
u_{r t}-u_{l t}=p^{\prime}\left(v_{l}\right)     v_{l x}   -p^{\prime}\left(v_{r}\right) v_{r x} \leq  C\left(\left|v_{lx}\right|+\left|v_{r x}\right|\right),
\end{equation*}
where we have used $(\ref{2.4})_3$ and \eqref{Lem-periodic}. Using \eqref{s1}  and \eqref{s2}, it follows that
\begin{align}\label{s34}
\begin{split}
&\left[ p_{R}^{\prime}(V)-p_{R}^{\prime}(V^r)\right]_{t}\\
=&p_{R}^{\prime \prime}(V) V_{t}-p_{R}^{\prime \prime}(V) V^r_{t}+p_{R}^{\prime \prime}(V) V^r_{t}-p_{R}^{\prime \prime}(V^r) V^r_{t} \\
\leq &C\left\{(U^r-U)_{x}+h_1+\left[p^{\prime \prime}(V)-\rho^{\prime \prime}(V^r)\right]\right\} \\
\leq& C \sum_{j=r,l} \left( \left|v_j-\bar{v}_j\right| +\left|u_j-\bar{u}_j \right| + \left| u_{jx}\right|  \right)  (\left|g_{1x}\right| +\left|g_{2x}\right| +\left|g_1\right|+\left|g_2\right| )+h_1.
\end{split}
\end{align}
Combination \eqref{y2.12}-\eqref{s34}, we have
\begin{equation*}
\|h_{2t}\|   \leq C \epsilon e^{-\alpha t}.
\end{equation*}
\end{proof}

\end{document}